\newtheorem{Theorem}{Theorem}
\newtheorem{Proposition}{Proposition}
\newtheorem{Lemma}{Lemma}
\newtheorem{Corollary}{Corollary}
\newtheorem{remark}{Remark}[part]
\newcommand \Sum{\displaystyle\sum}
\newcommand \Sup{\displaystyle\sup}
\renewcommand \P{\mathbb{P}}
\newcommand \R{\mathbb{R}}
\newcommand \Bc{\mathcal{B}}
\newcommand \Fc{\mathcal{F}}
\newcommand \ep{\hbox{ }\hfill$\Box$}
\newcommand \dd{{\rm d}}
\newcommand \dr{\dd r}
\newcommand \ds{\dd s}
\newcommand \dW{\dd W}
\newcommand \dB{\dd B}
\title{Zhang $L^2$-Regularity for the solutions of Backward Doubly Stochastic Differential Equations under globally Lipschitz continuous assumptions}
\def\first{\vtop{\baselineskip=11pt
\hbox to 125truept{\hss Achref Bachouch\footnote{Matematisk institutt, Universitetet i Oslo. Postboks 1053 Blindern,
0316 Oslo, Norway. Email: {\tt  achref.bachouch@gmail.com}.}
\hss } \vskip2truept
 \hbox to
125truept{\small\hss Universitetet i Oslo   \hss}
\hbox to
125truept{\small\hss   \hss}  }}
 \def\third
\begin{document}
\author{\first \quad  \third}
\maketitle
\begin{abstract}
We prove an $L^2$-regularity result for the solutions of Forward Backward Doubly Stochastic Differentiel Equations (F-BDSDEs in short) under globally Lipschitz continuous assumptions on the coefficients. Therefore, we extend the well known regularity results established by Zhang (2004) for Forward Backward Stochastic Differential Equations (F-BSDEs in short) to the doubly stochastic framework. 
To this end, we prove (by Malliavin calculus) a representation result for the martingale component of the solution of the F-BDSDE under the assumption that the coefficients are continuous in time and continuously differentiable in space with bounded partial derivatives. As an (important) application of our $L^2$-regularity result, we derive the rate of convergence in time for the (Euler time discretization based) numerical scheme for F-BDSDEs proposed by Bachouch et al.(2016) under only globally Lipschitz continuous assumptions.

Keywords: Forward Backward Doubly Stochastic Differential Equations; $L^2$-regularity, Malliavin calculus; representation result; numerical scheme; rate of convergence

MSC2010: Primary 60H10, Secondary 65C30 
\end{abstract}

\section{Introduction}
\label{introduction:section}
Stochastic partial differential equations (SPDEs in short) appear in many applications, like Zakai equations in non linear filtering, stochastic control with partial observations and genetic populations. The SPDE of our interest is of the following form 
\begin{align}\label{SPDE}
u_t(x)=\Phi(x)&+\int_t^T [ L 
u_s(x) + f(s,x,u_s(x), ( \nabla u_s \sigma)(x)) ] \ds \nonumber \\
&+\int_t^T h(s,x,u_s(x), (\nabla u_s \sigma)(x)) \overleftarrow{ dB_{s} } ,
\end{align}
where, $T>0$ is fixed, $u_t ( x )=u ( t,x)$ is a predictable random field, $f$ and $h$ are non-linear deterministic  coefficients, $ L u= \big( Lu_1, \cdots,  Lu_k \big)$ 
is a second order differential operator and $\sigma$ is the diffusion coefficient. 
The differential term with $\overleftarrow{d B}_t$
refers to the backward stochastic integral with respect to an $l$-dimensional Brownian motion on
$\big(\Omega, \mathcal{F},P, (B_t)_{t\geq 0} \big)$.
F-BDSDEs have been  introduced to give a Feynman-Kac representation for the classical solution of the stochastic semilinear PDE \eqref{SPDE}, see the seminal work of \cite{pard:peng:94}. The BDSDE $(Y^{t,x},Z^{t,x})$ of our interest is of the following form
\begin{align}\label{BDSDE1}
Y_{s}^{t,x}&= \Phi(X_{T}^{t,x}) +\int_{s}^{T}f(r,X_{r}^{t,x},Y_{r}^{t,x},Z_{r}^{t,x})\dr\\
&+ \int_{s}^{T}h(r,X_{r}^{t,x},Y_{r}^{t,x},Z_{r}^{t,x})\overleftarrow{\dB_{r}}
- \int_{s}^{T}Z_{r}^{t,x}\dW_{r},\nonumber
\end{align}
where $(X_{s}^{t,x})_{ t\leq s\leq T}$ is a $d$-dimensional diffusion process  starting from $x$ at time $t$ driven by 
the finite $d$-dimensional Brownian motion $(W_t)_{0\leq t\leq T}$ (independent from $B$) with infinitesimal generator  $L$. Under  some regularity assumptions on the coefficients $b,\sigma,\Phi,f$ and $h$,
the authors in \cite{pard:peng:94}  proved that $u_t (x) =Y_{t}^{t,x}$ and $\nabla u_t  \sigma (x)  =Z_t^{t,x}$, $\forall (t,x)\in [0,T]\times\mathbb{R}^{d}$ (see \cite[Theorem 3.1]{pard:peng:94} for details).
Many generalizations studying more general nonlinear SPDEs have been made by different approaches of the notion of weak solutions, that is,  Sobolev's solutions ( see \cite{ kryl:99,ball:mato:01,mato:sche:02})  and  stochastic viscosity  solutions (see \cite{lion:soug:98,buck:ma:01, lion:soug:02}).\\
Essentially, SPDEs have been numercially resolved by an analytic approach, that is, based on time-space discretization  of the equations. The discretization is achieved by different methods such as finite difference, finite element and spectral Galerkin methods \cite{gyon:nual:95, gyon:99, wals:05, gyon:kryl:10, jent:kloe:10}. More precisely, the Euler finite-difference scheme was studied in \cite{gyon:nual:95}, \cite{gyon:99} and  \cite{gyon:kryl:10}. Its convergence was proved in \cite{gyon:nual:95} and the order of convergence was determined in \cite{gyon:99}. Very  interesting results are presented in \cite{gyon:kryl:10} when
they studied a  symmetric finite difference scheme for a class
  of linear SPDEs driven by an infinite dimensional Brownian motion. The authors proved that the approximation error is proportional to
   $\bar{h}^2$
 where $\bar{h}$ is the discretization step in space. They even proved (using the Richardson acceleration method) that if the SPDE is non degenerate
and the coefficients are m-times continuously differentiable in the state variable (of dimension $d$) with $m>1+\frac{d}{2}$, then the error is proportional to  $\bar{h}^4$.
Finite element based schemes for parabolic SPDEs were studied in \cite{wals:05} in the one-dimensional case, with a study of the rate of convergence for the Forward and Backward Euler and the Crank-Nicholson schemes. The obtained rate of convergence is similar to the rate of the finite difference schemes. The spectral Galerkin approximation was investigated in \cite{jent:kloe:10}. The method is based on Taylor expansions derived from the solution of the SPDE, under sufficient regularity conditions.
This approach was also used in \cite{loto:Miku:Rozo:97} to approximate the solution of the Zakai equation.\\
Only recently some works took an active interest in the simulation and approximation of \eqref{BDSDE1}. This interest was motivated by results and advances in the approximation and simulation of the standard F-BSDEs during the last fifteen years. Indeed, when $h\equiv 0$, SPDE \eqref{SPDE} becomes a deterministic PDE and we deal with a standard F-BSDE. The numerical resolution of F-BSDEs has already been
studied in the literature by Bally \cite{ball:97}, Zhang \cite{zhan:04}, Bouchard and Touzi \cite{bouc:touz:04}, Gobet, Lemor and Warin \cite{gobe:lemo:wari:06} and Bouchard and Elie
\cite{bouc:elie:08} among others. Zhang \cite{zhan:04} suggested a discrete-time approximation, by step processes,
for a class of decoupled F-BSDEs with possible path-dependent terminal values. He established an $L^2$-type regularity result for
the F-BSDE's solution. Then he proved the convergence of his numerical scheme and he derived the rate of convergence in time. Bouchard and Touzi \cite{bouc:touz:04} proposed a similar numerical scheme for decoupled F-BSDEs. They computed the conditional expectations involved in their numerical scheme using the kernel regression estimation and used the Malliavin approach and the Monte carlo method for their computation. Gobet, Lemor and Warin in \cite{gobe:lemo:wari:06} suggested an explicit (time discretization based) numerical scheme. They also proposed an empirical regression scheme to approximate the nested conditional expectations arising from the time discretization of the standard F-BSDE. The latter method,  also known as regression Monte-Carlo method or least-squares Monte-Carlo method, is popular and known to perform well
for high-dimensional problems.\\
In the stochastic PDEs' case, that is $h\neq 0$, Aman \cite{aman:13} and Aboura \cite{abou:11} considered the particular case when
  $h$ does not depend on the control variable $z$. Aman \cite{aman:13} proposed a numerical scheme following Bouchard and Touzi  \cite{bouc:touz:04} and obtained a convergence of order $\hat{h}$ of the square of the $L^2$- error ($\hat{h}$ is the time discretization step).  Aboura \cite{abou:11} studied the same numerical scheme under the same kind of assumptions, but following Gobet et al. \cite{gobe:lemo:wari:05}. He obtained a convergence of order $\hat{h}$  in time and used the regression Monte Carlo method to implement his scheme, as in  \cite{gobe:lemo:wari:05}. Also, when $h$ doesn't depend on $z$, a first order scheme was proposed in \cite{bao:cao:meir:zhao:16} using the two sided
Ito-Taylor expansion when the forward process is a drifted Brownian motion. Under the assumption that the coefficients are 3 times continuously differentiable with all partial derivatives bounded, they obtained a rate of convergence of order $\hat{h}^2$ for the component   $y$ and of order $\hat{h}$ for the component $z$.  \\
In the general case, that is, $h$ depends on the variable $z$, 
 the authors in \cite{bao:cao:zhao:11} studied the time discretization error for the time discretization based approximation scheme for F-BDSDEs when the forward process is simply a drifted Brownian motion. They derived a rate of convergence of order $\hat{h}$ under the assumption that the coefficients are continuously differentiable in space with all partial derivatives uniformly bounded.
In \cite{bach:benl:mato:mnif:16}, the authors extended the approach of Bouchard-Touzi-Zhang to F-BDSDEs. They gave an upper bound for the time discretization error under globally Lipschitz continuous assumptions on the coefficients. However, they derived the rate of convergence of this scheme under rather strong assumptions, namely, all the coefficients are 2 times continuously differentiable with all partial derivatives bounded. Finally, they deduced a numerical scheme for the weak solution of the  semilinear SPDE \eqref{SPDE} and gave the rate of convergence in time for the latter numerical scheme. The problem of approximation of nested conditional expextations arising from the time discretization of F-BDSDEs was recently resolved in \cite{bach:gobe:mato:16} using the regression Monte-Carlo method. The resolution was done conditionally to the paths of the Brownian motion $B$, in the spirit of SPDE \eqref{SPDE}, under globally Lipschitz continuous assumptions on the coefficients.\\
This leads to the motivation of this paper. For the numerical resolution of F-BDSDEs with coefficient $h$ depending on $z$, the regression Monte-Carlo scheme studied in \cite{bach:gobe:mato:16} converges under Lipschitz continous assumptions on the coefficients, while the rate of convergence in time is proved in \cite{bach:benl:mato:mnif:16} under the assumption that the coefficients are 2 times continuously differentiable with all partial derivatives bounded. A natural problem of interest is to derive the same rate of convergence in time (obtained in \cite{bach:benl:mato:mnif:16}) under only Lipschitz continuous assumptions on the coefficients. This enables the approximation and simulation of F-BDSDEs under only Lipschitz continuous conditions on the coefficients, which in turn enables the numerical approximation of weak solutions of SPDE \eqref{SPDE} (via F-BDSDEs) under rather mild conditions. \\
To this end, we proceed as follows. First, we study the Malliavin derivative of the solution $(Y,Z)$ of the F-BDSDE and we prove a representation result for the martingale component $Z$ of the solution under the assumption that all the coefficients are continuous in time and continuously differentiable in space with all partial derivatives uniformly bounded. Afterwards, we use this representation result to prove an $L^2$-regularity result for the solution of the F-BDSDE under globally Lipschitz continuous assumptions, extending the well known $L^2-$ regularity results for F-BSDEs (proved by Zhang in \cite{zhan:04}) to the doubly stochastic framework. Then, our $L^2-$ regularity result is used to derive the rate of convergence in time of the numerical scheme for F-BDSDEs proposed in \cite{bach:benl:mato:mnif:16}  under globally Lipschitz continuous assumptions.  \\ 
 
The paper is organized as follows. In section 2, we make a recall about F-BDSDES and introduce the notations and the assumptions needed in our work. In section 3, we prove a representation result for the martingale component $Z$ of the solution. Then, we prove our main result which is an $L^2$-regularity result for the solution of the F-BDSDE. In section 4, we apply our $L^2$-regularity result to derive  the rate of convergence in time of the numerical scheme for F-BDSDEs proposed in \cite{bach:benl:mato:mnif:16}.

\paragraph{Usual notations.}
{If $x$ is in an Euclidean space $E$, $|x|$ denotes its norm. If $A$ is a matrix, $|A|$ stands for its Hilbert-Schmidt norm.}

\section{Notations, preliminaries on Forward Backward Doubly Stochastic Differential Equations and assumptions}
\label{FBDSDE:subsection}

We assume that $W$ and $B$ are two independent Brownian motions defined on a filtered probability space $\big(\Omega, \Fc,\P\big)$, where we define the sigma-fields $\Fc_{t,s}^{W}:= \sigma\{W_{r}-W_{t}, t\leq r\leq s\}$, $\Fc_{s,T}^{B}:= \sigma\{B_{r}-B_{s}, s\leq r\leq T \}$, $\Fc^W:=\Fc_{0,T}^{W}$, $\Fc^B:=\Fc_{0,T}^{B}$, 
  $\Fc:= \Fc^{W}\vee \Fc^{B}$, all completed with the $\P$-null sets. The solution of \eqref{BDSDE1} is measurable at each $s$ with respect to  $\Fc_{s}^{t}$, where $(\Fc_{s}^{t})_{t \leq s \leq T}$ is the collection of sigma-fields defined as follows. For fixed $t\in[0,T]$ and for all $s \in [t,T]$
\begin{equation}
\Fc_{s}^{t}:= \Fc_{t, s}^{W}\vee \Fc_{s,T}^{B}\label{eq:fst}. \nonumber
\end{equation}
We denote $\mathcal{F}_s^0$ by $\mathcal{F}_s$ for simplicity.\\

We also need to introduce the following spaces:\\
$\bullet$ $C_{b}^{l}(\mathbb{R}^{p},\mathbb{R}^{q})$ denotes the set
of all functions $\phi:\mathbb{R}^{p} \longrightarrow \R^{q}$ such that they are $l$-times continuously differentiable with all partial derivatives uniformly bounded. We denote  $C_{b}^{l}$ when the context is clear.\\
$\bullet$ $C_{b}^{k,l}([0,T] \times \mathbb{R}^{p},\mathbb{R}^{q})$ denotes the set
of all functions $\phi:[0,T] \times\mathbb{R}^{p} \longrightarrow \R^{q}$ such that they are $k$-times continuously differentiable in time and $l$-times continuously differentiable in space with all partial derivatives uniformly bounded. We denote  $C_{b}^{k,l}$ when the context is clear.\\
$\bullet$ $L^2(\Omega,\mathcal{F}_T,P;\mathbb{R}^k)$ denotes the set of $\mathcal{F}_T$-measurable square integrable
random variables with values in $\mathbb{R}^k$.\\
For any $m\in\mathbb{N}$ and $t\in [0,T] $,  the following notations are introduced:\\
$\bullet$ $\mathbb{H}^{2}_m([t,T])$
denotes the set of (classes of $dP\times dt$ a.e. equal) $\mathbb{R}^{m}$-valued jointly measurable processes $\{\psi_{u}; u\in [t,T]\}$ satisfying:\\
(i) $||\psi||^2_{\mathbb{H}^{2}_m([t, T])}:=E[\int_{t}^{T}|\psi_{u}|^{2}du]<\infty$,\\
(ii) $\psi_{u} $ is $ \mathcal{F}_{u}$-measurable, for a.e. $u\in [t,T]$.\\
$\bullet$ $\mathbb{S}^{2}_m([t, T])$ denotes similarly the set of $\mathbb{R}^{m}$-valued continuous processes satisfying:\\
(i) $||\psi||^2_{\mathbb{S}^{2}_m([t, T])}:=E[\sup_{t\leq u\leq T}|\psi_{u}|^{2}]<\infty$,\\
(ii) $\psi_{u} $ is $ \mathcal{F}_{u}$-measurable, for any $u\in [t,T]$.\\
$\bullet$  $\mathbb{S}$ denotes the set of random variables $F$ of the form $$F=\hat f(W(h_{1}),\ldots,W(h_{m_1}),B(k_{1}),\ldots,B(k_{m_{2}})),$$ 
where $\hat f\in C_{b}^{\infty}(\mathbb{R}^{m_1+m_2},\R)$,
$h_{1},\ldots,h_{m_1}\in L^{2}([t,T],\mathbb{R}^{d}),k_{1},\ldots,k_{m_2}\in L^{2}([t,T],\mathbb{R}^{l})$,
\begin{align*}
W(h_{i}):=\int_{t}^{T}h_{i}(s)dW_{s} \textrm{ and }
B(k_{j}):=\int_{t}^{T}k_{j}(s)\overleftarrow{dB_{s}}.
\end{align*}
For any random variable $F \in \mathbb{S}$,  its Malliavin derivative $(D_{s}F)_s$ is defined  with respect to the Brownian motion
 $W$ as follows
\begin{eqnarray*}
 D_{s}F := \sum_{i=1}^{m_1}\nabla_{i} \hat f \bigg(W(h_{1}),\ldots,W(h_{m_1});B(k_{1}),\ldots,B(k_{m_{2}})\bigg)h_{i}(s),
\end{eqnarray*}
where $\nabla_{i}\hat f$ is the derivative of $\hat f$ with respect to its i-th argument.\\
We define a norm on $\mathbb{S}$  by:
\begin{eqnarray*}
\|F\|_{1,2}:=\big\{E[|F|^{2}]+\int_{t}^{T}E[|D_{s}F|^{2}]ds \big\}^{\frac{1}{2}}.
\end{eqnarray*}
$\bullet$ $\mathbb{D}^{1,2}:= \overline{{\mathbb S}}^{\|.\|_{1,2}}$ is then a Sobolev space.\\
$\bullet$ $\mathcal{S}_k^2([t,T],\mathbb{D}^{1,2})$ is the set of processes $Y=(Y_u,t\leq u\leq T)$
such that $Y\in \mathbb{S}_k^2([t,T])$, $Y^i_u\in \mathbb{D}^{1,2}$, $1\leq i\leq k$, $t\leq u\leq T$ and
$$\|Y\|_{1,2}:=\Big\{E\Big[\int_{t}^{T}|Y_u|^2du + \int_{t}^{T}\int_{t}^{T}|D_{\theta}Y_u|^2du d\theta \Big] \Big\}^{\frac{1}{2}}<\infty.$$
$\bullet$ $\mathcal{M}_{k\times d}^2([t,T],\mathbb{D}^{1,2})$ is the set of processes $Z=(Z_u,t\leq u\leq T)$
such that $Z\in \mathbb{H}_{k\times d}^2([t,T])$, $Z^{i,j}_u\in \mathbb{D}^{1,2}$,$1\leq i\leq k$, $1\leq j\leq d$, $t\leq u\leq T$ and
$$ \|Z\|_{1,2}:=\Big\{E\Big[\int_{t}^{T}|Z_u |^2du+\int_{t}^{T}\int_{t}^{T}|D_{\theta}Z_u|^2du d\theta \Big] \Big\}^{\frac{1}{2}}<\infty.$$
$\bullet$ $\Bc^2([t,T],\mathbb{D}^{1,2}):={\cal S}_k^2([t,T],\mathbb{D}^{1,2})\times {\cal M}_{k\times d}^2([t,T],\mathbb{D}^{1,2})$.\\
We define also for a given $t \in [0,T]$:\\
$\bullet$  $L^2([t,T],\mathbb{D}^{1,2})$ is the set of processes $(v_s)_{t\leq s\leq T}$ such that  $v_s$ is measurable with respect to $\mathcal{F}_s^t$ for a.e. $s$ and \\
(i) $v(s,.)\in \mathbb{D}^{1,2}$, for a.e. $s\in [t,T]$,\\
(ii) $(s,w)\longrightarrow Dv(s,w)\in L^2([t,T]\times \Omega)$,\\
(iii) $E[\int_t^T|v_s|^2ds]+E[\int_t^T\int_t^T|D_uv_s|^2duds]<\infty$.\\
$\bullet$ $L^2([t,T],\mathbb{D}^{1,2}\times \mathbb{D}^{1,2}):=L^2([t,T],\mathbb{D}^{1,2})\times L^2([t,T],\mathbb{D}^{1,2})$.\\

For all $(t,x) \in [0,T]\times\mathbb{R}^{d}$, let $(X_{s}^{t,x})_{t \leq s \leq T}$ be the unique strong solution of the following  stochastic differential equation:
\begin{eqnarray}\label{forward}
dX_{s}^{t,x}=b(X_{s}^{t,x})ds+\sigma(X_{s}^{t,x})dW_{s},\quad s\in [t,T],\qquad X_{s}^{t,x}=x,\quad 0\leq s\leq t,
\end{eqnarray}
where $b$ and $\sigma$ are two functions on $\mathbb{R}^{d}$ with values respectively in $\mathbb{R}^{d}$ and $\mathbb{R}^{d\times d}$. \\
We consider the following BDSDE: For all $ t\leq s\leq T$,
\begin{equation}\label{1}
\left\{
\begin{array}{ll}
dY_{s}^{t,x}&= -f(s,X_{s}^{t,x},Y_{s}^{t,x},Z_{s}^{t,x})ds -h(s,X_{s}^{t,x},Y_{s}^{t,x},Z_{s}^{t,x})\overleftarrow{dB_{s}}
+Z_{s}^{t,x} dW_s,\\
Y_{T}^{t,x}&=\Phi(X_{T}^{t,x}),
\end{array}
\right.
\end{equation}
where $f$ and $\Phi$ are two functions respectively on $[0,T]\times\mathbb{R}^{d}\times\mathbb{R}^{k}\times\mathbb{R}^{k\times d}$ and $\mathbb{R}^{d}$ with values in $\mathbb{R}^{k}$ and
 $h$ is a function on $[0,T]\times \mathbb{R}^{d}\times\mathbb{R}^{k}\times\mathbb{R}^{k\times d}$ with values in $\mathbb{R}^{k\times l}$.\\
 We will omit the dependence of the process $X$ on the initial condition if it starts at time $t=0$. \\ 
The following assumptions will be needed in our work.\\
\textbf{Assumption (H1)} There exists a non-negative constant $K$ such that
\begin{eqnarray*}
 &&|b(x)-b(x')| + |\sigma(x) - \sigma(x')| \leq K|x-x'|,
\forall x, x' \in \mathbb{R}^{d}.
\end{eqnarray*}
\textbf{Assumption (H2)} There exist two constants $K\geq 0$ and $0\leq \alpha < 1$ such that\\
for any $(t_1,x_{1},y_{1},z_{1}),(t_2,x_{2},y_{2},z_{2})\in [0,T]\times\mathbb{R}^{d}
\times\mathbb{R}^{k}\times \mathbb{R}^{k\times d} ,$
\begin{align*}
\textbf{(i)}&|f(t_1,x_{1},y_{1},z_{1})-f(t_2,x_{2},y_{2},z_{2})|
\leq K \big(\sqrt{|t_{1}-t_{2}|}+|x_{1}-x_{2}|+|y_{1}-y_{2}|\\
&\qquad \qquad \qquad\qquad\qquad \qquad\qquad\qquad+|z_{1}-z_{2} |\big),\\
\textbf{(ii)}&|  h(t_1,x_{1},y_{1},z_{1})  -  h(t_2, x_{2},  y_{2},  z_{2})|^{2}
\leq
 K  \big(|t_{1}-t_{2}|\! +\! |x_{1} - x_{2}|^{2} + |y_{1} - y_{2}|^{2} \big)\\
&\qquad \qquad \qquad\qquad\qquad \qquad\qquad\qquad +\! \alpha^{2} | z_{1}\! -\! z_{2} |^{2},\\
\textbf{(iii)}&|\Phi(x_1)-\Phi(x_2)|\leq K|x_{1}-x_{2}|,\\
\textbf{(iv)}&\Sup_{0\leq t\leq T}(|f(t,0,0,0)|+|h(t,0,0,0)|)\leq K .
\end{align*}
\begin{remark}
Pardoux and Peng \cite[Theorem 1.1]{pard:peng:94} proved that under assumptions {\bf{(H1)}} and {\bf{(H2)}}, there exists a unique solution $(Y,Z)$ in $ \mathbb{S}^{2}_k([t,T])\times \mathbb{H}^{2}_{k\times d}([t,T])$ to the F-BDSDE \eqref{forward}-\eqref{1}.
\end{remark}
From \cite{elka:peng:quen:97}, \cite{pard:peng:94} and \cite{kuni:84}, the following standard estimates for the solution of the F-BDSDE \eqref{forward}-\eqref{1} hold
and we remind the following theorem.
\begin{Theorem}
Under assumptions \textbf{(H1)} and \textbf{(H2)}, there exists a positive constant $C$ such that
\begin{eqnarray}\label{integrability}
E[\Sup_{t\leq s\leq T}|X_{s}^{t,x}|^{2}]\leq C(1+|x|^2),
\end{eqnarray}
\begin{equation}\label{apriori1}
E\Big[\Sup_{t\leq s\leq T}|Y_{s}^{t,x}|^{2}+ \int_{t}^{T}|Z_{s}^{t,x}|^{2}ds  \Big]
\leq C(1+|x|^{2}).
\end{equation}
\end{Theorem}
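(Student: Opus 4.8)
The plan is to establish the two bounds by the classical route --- Itô's formula together with the Burkholder--Davis--Gundy (BDG) inequality and Gronwall's lemma --- as in \cite{kuni:84} for the forward component and in \cite{pard:peng:94,elka:peng:quen:97} for the backward component. For \eqref{integrability}, I would first note that \textbf{(H1)} together with the finiteness of $|b(0)|$ and $|\sigma(0)|$ yields the linear growth bounds $|b(x)|+|\sigma(x)|\le C(1+|x|)$; then, applying Itô's formula to $|X_u^{t,x}|^2$ on $[t,s]$, taking $\sup_{t\le u\le s}$ and then expectation, and bounding the resulting martingale term by BDG, one gets
$$E\Big[\sup_{t\le u\le s}|X_u^{t,x}|^2\Big]\le C(1+|x|^2)+C\int_t^s E\Big[\sup_{t\le v\le r}|X_v^{t,x}|^2\Big]\dr,$$
and Gronwall's lemma then gives \eqref{integrability}.

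For \eqref{apriori1}, set $\Theta_r:=(r,X_r^{t,x},Y_r^{t,x},Z_r^{t,x})$. Because the $\dB$-integral in \eqref{1} is a \emph{backward} Itô integral, its quadratic variation enters with a plus sign when Itô's formula is applied to $|Y_s^{t,x}|^2$, giving
\begin{align*}
|Y_s^{t,x}|^2+\int_s^T|Z_r^{t,x}|^2\dr
&=|\Phi(X_T^{t,x})|^2+2\int_s^T\langle Y_r^{t,x},f(\Theta_r)\rangle\dr+\int_s^T|h(\Theta_r)|^2\dr\\
&\quad+2\int_s^T\langle Y_r^{t,x},h(\Theta_r)\overleftarrow{\dB_r}\rangle-2\int_s^T\langle Y_r^{t,x},Z_r^{t,x}\dW_r\rangle .
\end{align*}
By \textbf{(H2)}(ii) (taken with $t_1=t_2$) and \textbf{(H2)}(iv), $|h(\Theta_r)|^2\le(1+\varepsilon)\alpha^2|Z_r^{t,x}|^2+C_\varepsilon(1+|X_r^{t,x}|^2+|Y_r^{t,x}|^2)$, while by \textbf{(H2)}(i),(iv) and Young's inequality, $2\langle Y_r^{t,x},f(\Theta_r)\rangle\le\delta|Z_r^{t,x}|^2+C_\delta(1+|X_r^{t,x}|^2+|Y_r^{t,x}|^2)$. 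I would choose $\varepsilon,\delta>0$ small enough that $\kappa:=1-(1+\varepsilon)\alpha^2-\delta>0$ --- possible precisely because $\alpha<1$ --- so that the $|Z^{t,x}|^2$-contributions are absorbed by the term on the left. A localization argument then removes the expectations of the two stochastic integrals (they are only local martingales, since $(Y,Z)$ belongs to $\mathbb{S}^2_k\times\mathbb{H}^2_{k\times d}$ and need not be bounded), and using \eqref{integrability} and \textbf{(H2)}(iii) to bound $E[|\Phi(X_T^{t,x})|^2]$ and $E[\int_s^T|X_r^{t,x}|^2\dr]$ by $C(1+|x|^2)$ leads to
$$E\big[|Y_s^{t,x}|^2\big]+\kappa\,E\Big[\int_s^T|Z_r^{t,x}|^2\dr\Big]\le C(1+|x|^2)+C\int_s^T E\big[|Y_r^{t,x}|^2\big]\dr .$$
Gronwall's lemma yields $\sup_{t\le s\le T}E[|Y_s^{t,x}|^2]\le C(1+|x|^2)$, and reinserting this bound gives $E[\int_t^T|Z_r^{t,x}|^2\dr]\le C(1+|x|^2)$.

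To upgrade $\sup_s E[\cdot]$ to $E[\sup_s\cdot]$ for the $Y$-component, I would return to the Itô identity above, take $\sup_{t\le s\le T}$ before expectation, and apply BDG to the two martingale terms; for instance $E[\sup_s|\int_s^T\langle Y_r^{t,x},Z_r^{t,x}\dW_r\rangle|]$ is bounded, via BDG and Young's inequality, by $\tfrac14 E[\sup_s|Y_s^{t,x}|^2]+C\,E[\int_t^T|Z_r^{t,x}|^2\dr]$, and similarly for the $\overleftarrow{\dB}$-term. Absorbing the $\sup$ term and invoking the bounds already obtained gives $E[\sup_{t\le s\le T}|Y_s^{t,x}|^2]\le C(1+|x|^2)$, completing \eqref{apriori1}.

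The step I expect to be the crux is the bookkeeping of the $|Z^{t,x}|^2$-coefficients: the extra term $+\int_s^T|h(\Theta_r)|^2\dr$ produced by the backward stochastic integral contributes $(1+\varepsilon)\alpha^2|Z_r^{t,x}|^2$ and Young's inequality on the $f$-term contributes a further $\delta|Z_r^{t,x}|^2$, and one must check their sum stays strictly below the coefficient $1$ of $\int_s^T|Z_r^{t,x}|^2\dr$ on the left-hand side --- which is exactly where the standing hypothesis $\alpha<1$ is used. Everything else reduces to Gronwall's lemma and the standard SDE/BDSDE moment machinery, the localization being routine, which is why the statement is only recalled here from \cite{pard:peng:94,elka:peng:quen:97,kuni:84}.
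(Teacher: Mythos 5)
Your proof is correct and is precisely the standard argument that the paper does not reproduce but merely recalls by citation to \cite{pard:peng:94}, \cite{elka:peng:quen:97} and \cite{kuni:84}: It\^o plus BDG plus Gronwall for $X$, and for $(Y,Z)$ the generalized It\^o formula in which the backward integral contributes $+\int_s^T|h|^2\,\dr$, with the resulting $(1+\varepsilon)\alpha^2+\delta$ coefficient of $|Z|^2$ absorbed on the left thanks to $\alpha<1$ \textbf{(H2)}, followed by the BDG upgrade to $E[\sup_s|Y_s|^2]$. You correctly identified the crux (the absorption of the $|Z|^2$-terms coming from the backward integral and from Young's inequality on the $f$-term), so nothing needs to be added.
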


\section{Representation result and Zhang $L^2$-regularity}\label{section Zhang regularity}
The aim of this section is to prove an $L^2$-regularity result for the solution of the F-BDSDE \eqref{forward}-\eqref{1} under globally Lipschitz continuous assumptions on the coefficients. To this end, we prove
a representation and a path regularity results for the martingale component $Z$ of the solution under the assumption that the coefficients $b,\sigma$ and $\Phi$ are in $C_b^{1}$ and $f$ and $h$ are  in $C_b^{0,1}$. These results enable us to derive a rate of convergence in time for the numerical scheme for F-BDSDEs studied in \cite{bach:benl:mato:mnif:16} (see subsections \ref{Num scheme BDSDEs} and \ref{section-discrete-time-error} for details) under only globally Lipschitz continous assumptions on the coefficients $b,\sigma,f,h$ and $\Phi$, in the spirit of the results of Zhang \cite{zhan:04}.\\
Let us stress that the representation  and the path regularity results for the component $Z$  are proved in \cite{bach:benl:mato:mnif:16} under the assumption that all the coefficients are 2 times continuously differentiable with all partial derivatives uniformly bounded. Here, the proofs are given under weaker assumptions compared to \cite{bach:benl:mato:mnif:16}.
\subsection{Malliavin calculus for the solutions of forward SDEs}
In this subsection, we recall some results on the differentiability in the Malliavin sense of the forward process $X^{t,x}$. Under the assumption that $b$ and $\sigma$ are in $C_b^{1}$, Nualart \cite{nual:06}  stated that $X_{s}^{t,x} \in \mathbb{D}^{1,2}$ for any $s\in[t,T]$ and for $l\leq k$,
the derivative $D_{r}^{l}X_{s}^{t,x}$ is given by:\\
(i) $D_{r}^{l}X_{s}^{t,x} = 0,\quad for\quad s<r\leq T$,\\
(ii) For any\quad$t<r\leq T$, a version of $\{D_{r}^{l}X_{s}^{t,x}, r\leq s\leq T \}$ is the unique solution of the following linear SDE
\begin{equation}\label{Mall Deriv X SDE}
D_{r}^{l}X_{s}^{t,x} = \sigma^{l}(X_{r}^{t,x}) + \int_{r}^{s}\nabla b(X_{u}^{t,x})D_{r}^{l}X_{u}^{t,x}du
+\sum_{i=1}^{d}\int_{r}^{s}\nabla\sigma^{i}(X_{u}^{t,x})D_{r}^{l}X_{u}^{t,x}dW_{u}^{i},
\end{equation}
where $(\sigma^{i})_{i=1,\ldots,d}$ denotes the i-th column of the matrix $\sigma$.\\
The following inequalities will be useful later. From \cite{nual:06}, we know that for any $0 \leq r \leq s \leq T$, there exists a non-negative constant $C$ such that
\begin{equation}\label{aprioriDX}
E\Big[\Sup_{0\leq u\leq T} |D_s X_u|^2\Big] \leq C (1+|x|^{2}),
\end{equation}
\begin{equation}\label{aprioriDX2}
E[\Sup_{s \vee r \leq u \leq T} |D_{s}X_{u}-D_{r}X_{u}|^{2}]\leq C |s-r|(1+|x|^{2}).
\end{equation}

\subsection{Malliavin calculus for the solutions of F-BDSDEs}
In this subsection, we study the differentiability in the Malliavin sense of the solution of the F-BDSDE \eqref{forward}-\eqref{1}. First, we recall the following result from Pardoux and Peng \cite{pard:peng:92} about the Malliavin derivative of the classical It\^o integral.
\begin{Lemma}[\cite{pard:peng:92}]\label{forD12}
Let $U\in \mathbb{H}_1^2([t,T])$ and $I_{i}(U)=\int_{t}^{T}U_{r} dW^{i}_{r},
i=1,\ldots,d$.  Then, for each $\theta\in[0,T]$ we have $U_{\theta}\in \mathbb{D}^{1,2}$
if and only if $I_{i}(U)\in\mathbb{D}^{1,2}, i=1,\ldots,d$ and for all $\theta \in [0,T]$, we have
\begin{eqnarray*}
D_{\theta}I_{i}(U)&=&\int_{\theta}^{T}D_{\theta}U_{r}dW_{r}^{i}+ U_{\theta},\,\,\theta>t,\\
D_{\theta}I_{i}(U)&=&\int_{t}^{T}D_{\theta}U_{r}dW_{r}^{i},\,\,\theta\leq t.
\end{eqnarray*}
\end{Lemma}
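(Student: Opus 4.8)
The plan is to establish the commutation formula first on a dense class of smooth step processes by a direct computation with the product rule for the Malliavin derivative, and then to pass to the limit using the closedness of $D$. Throughout, the backward Brownian motion $B$ is inert: any factor in $\mathbb{S}$ built from $B$-increments behaves as a constant under the ($W$-)derivative $D$, so the argument is the exact analogue of the classical one-Brownian-motion statement, and it suffices to fix $i\in\{1,\dots,d\}$ and argue componentwise.

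\textbf{Step 1 (smooth step integrands).} Take $t=t_0<t_1<\dots<t_n=T$ and $U_r=\sum_{j=0}^{n-1}F_j\mathbf 1_{(t_j,t_{j+1}]}(r)$ with $F_j\in\mathbb{S}$ and $\mathcal F_{t_j}$-measurable, so that $I_i(U)=\sum_jF_j(W^i_{t_{j+1}}-W^i_{t_j})\in\mathbb{D}^{1,2}$. Using $D_\theta(FG)=(D_\theta F)G+F\,D_\theta G$ on $\mathbb{S}$ and $D^m_\theta(W^i_s-W^i_u)=\delta_{im}\mathbf 1_{(u,s]}(\theta)$ one obtains
\begin{equation*}
D_\theta I_i(U)=\sum_j(D_\theta F_j)(W^i_{t_{j+1}}-W^i_{t_j})+\sum_jF_j\mathbf 1_{(t_j,t_{j+1}]}(\theta)=\sum_j(D_\theta F_j)(W^i_{t_{j+1}}-W^i_{t_j})+U_\theta .
\end{equation*}
Adaptedness of $U$ forces $D_\theta F_j=0$ whenever $\theta>t_j$, so only indices with $t_j\ge\theta$ contribute to the first sum, and for those $W^i_{t_{j+1}}-W^i_{t_j}=W^i_{t_{j+1}\vee\theta}-W^i_{t_j\vee\theta}$; hence that sum equals $\int_\theta^TD_\theta U_r\,dW^i_r$, while the second sum equals $U_\theta$ for $\theta>t$ and vanishes for $\theta\le t$. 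This is exactly the asserted identity for smooth step $U$.

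\textbf{Step 2 (the implication $U_\theta\in\mathbb{D}^{1,2}\Rightarrow I_i(U)\in\mathbb{D}^{1,2}$).} Let $U\in\mathbb{H}^2_1([t,T])$ be adapted with $U_\theta\in\mathbb{D}^{1,2}$ for a.e.\ $\theta$ and $\E\int_t^T\!\int_t^T|D_\theta U_r|^2\,dr\,d\theta<\infty$. Choose smooth step processes $U^n$ as in Step 1 with $U^n\to U$ in $\mathbb{H}^2_1([t,T])$ and $DU^n\to DU$ in $L^2([t,T]\times[t,T]\times\Omega)$ (density of such processes in the adapted part of $L^2([t,T],\mathbb{D}^{1,2})$ is standard). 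By It\^o's isometry $I_i(U^n)\to I_i(U)$ in $L^2(\Omega)$, and, by Step 1 and It\^o's isometry again, $D_\theta I_i(U^n)=\int_\theta^TD_\theta U^n_r\,dW^i_r+U^n_\theta\longrightarrow\int_\theta^TD_\theta U_r\,dW^i_r+U_\theta$ in $L^2([t,T]\times\Omega)$. Since $D$ is closed, $I_i(U)\in\mathbb{D}^{1,2}$ with the stated formula.

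\textbf{Step 3 (the converse, and the main obstacle).} The reverse implication is not obtained by reversing Step 2, and this is where the real work lies. I would use the Wiener chaos decomposition with respect to $W$: writing $U_r=\sum_{m\ge0}J_m(g_m(\cdot,r))$ with kernels $g_m$ that, by adaptedness, are supported on $\{s_1,\dots,s_m\le r\}$, one has $I_i(U)=\sum_mJ_{m+1}(\widetilde g_m)$ for the appropriate symmetrizations, and the combinatorial identity linking $\|\widetilde g_m\|^2_{L^2([t,T]^{m+1})}$ with $\int_t^T\|g_m(\cdot,r)\|^2_{L^2([t,r]^m)}\,dr$ for adapted kernels shows that finiteness of $\sum_m(m+1)(m+1)!\,\|\widetilde g_m\|^2$ (i.e.\ $I_i(U)\in\mathbb{D}^{1,2}$) forces finiteness of $\sum_m m\cdot m!\int_t^T\|g_m(\cdot,r)\|^2\,dr$ (i.e.\ $U_\theta\in\mathbb{D}^{1,2}$ for a.e.\ $\theta$ together with the joint integrability), after which Step 2 supplies the formula. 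The main obstacle is precisely this converse — tracking the symmetrization norms of adapted kernels — everything else reducing to the product rule on $\mathbb{S}$ followed by a routine $L^2$-approximation; in practice one may also simply quote Pardoux--Peng \cite{pard:peng:92} or Nualart \cite{nual:06}.
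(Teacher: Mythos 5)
The paper offers no proof of this lemma at all: it is recalled verbatim from Pardoux--Peng \cite{pard:peng:92} (just as its backward-integral companion, Lemma \ref{backD12}, is recalled from \cite{bach:benl:mato:mnif:16}), so there is no internal argument to compare against; what you have written is a reconstruction of the standard proof of the cited result, and in outline it is correct. Your Steps 1--2 (product rule on smooth step integrands, then It\^o isometry plus closedness of $D$) and Step 3 (Wiener chaos expansion, using that adaptedness makes the symmetrized kernel norms comparable, $\|\tilde g_m\|^2=\tfrac{1}{m+1}\|g_m\|^2$, so that membership of $I_i(U)$ in $\mathbb{D}^{1,2}$ is equivalent to the joint integrability of $DU$) are exactly the classical route; what this buys over the paper's bare citation is self-containedness, at the cost of redoing textbook material. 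The one point you should make precise rather than dismiss with ``$B$ is inert'': in this paper $U\in\mathbb{H}^2_1([t,T])$ means $U_r$ is $\mathcal{F}_r=\mathcal{F}^W_{0,r}\vee\mathcal{F}^B_{r,T}$-measurable, so $U$ is \emph{not} adapted to the filtration of $W$ alone. Your argument survives because $W$ and $B$ are independent, but this is where the work sits: (i) the It\^o integral against $W$ and its isometry must be taken under the enlarged filtration $\mathcal{F}^W_{0,s}\vee\mathcal{F}^B_{0,T}$, for which $W$ remains a Brownian motion by independence; (ii) in Step 1, $D_\theta F_j=0$ for $\theta>t_j$ holds because $F_j$ depends on $W$ only through $\mathcal{F}^W_{0,t_j}$ while the $B$-increments are annihilated by $D$, and the density of such step processes must be asserted in this mixed-measurability class; (iii) in Step 3, the chaos expansion has to be taken conditionally on $\mathcal{F}^B$, i.e.\ with $\mathcal{F}^B$-measurable random kernels $g_m(\cdot,r)$ supported on $\{s_1,\dots,s_m\le r\}$ for a.e.\ $B$-path, after which your combinatorial identity applies pathwise in $B$ and integrates. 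With these adjustments spelled out your sketch is a complete proof; as it stands, the converse direction in particular is still closer to a plan (as you acknowledge by offering to quote \cite{pard:peng:92} or \cite{nual:06}) than to an argument, which is also precisely what the paper itself does.
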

We also recall the following lemma from \cite{bach:benl:mato:mnif:16} which shows that a backward It\^o integral is differentiable in the
Malliavin sense if and only if its integrand is so. More precisely, since the Malliavin derivative is with respect to the Brownian motion W, we have
\begin{Lemma}[\cite{bach:benl:mato:mnif:16}]\label{backD12}
Let $U\in \mathbb{H}_1^2([t,T])$ and $I_{i}(U)=\int_{t}^{T}U_{r} \overleftarrow{dB^{i}_{r}},
i=1,\ldots,l$. Then for each $\theta\in[0,T]$ we have $U_{\theta}\in \mathbb{D}^{1,2}$ if and only if
$I_{i}(U)\in\mathbb{D}^{1,2}, i=1,\ldots,l$ and for all $\theta \in [0,T]$, we have
\begin{eqnarray*}
D_{\theta}I_{i}(U)&=&\int_{\theta}^{T}D_{\theta}U_{r}\overleftarrow{dB^{i}_{r}},\,\,\theta>t,\\
D_{\theta}I_{i}(U)&=&\int_{t}^{T}D_{\theta}U_{r}\overleftarrow{dB^{i}_{r}},\,\,\theta\leq t.
\end{eqnarray*}
\end{Lemma}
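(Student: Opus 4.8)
The plan is to follow the scheme of the proof of Lemma \ref{forD12} (Pardoux--Peng), the decisive simplification being that, since $B$ is independent of $W$ while the Malliavin derivative $D$ is taken with respect to $W$, the backward integrator $\overleftarrow{dB^i_r}$ is annihilated by $D$; consequently, in contrast with the forward case, no boundary term is produced. First I would establish the formula for an elementary integrand $U$, that is, $U$ equal on each interval $(t_j,t_{j+1}]$ of a subdivision $t=t_0<\dots<t_n=T$ to a random variable $\xi_j\in\mathbb{S}$ which is $\Fc_{t,t_j}^{W}\vee\Fc_{t_{j+1},T}^{B}$-measurable (such processes being dense in $\mathbb{H}^2_1([t,T])$ for the relevant past-$W$/future-$B$ filtration). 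Then $I_i(U)=\sum_j\xi_j\,(B^i_{t_{j+1}}-B^i_{t_j})$, and the product rule for $D$ together with $D_\theta(B^i_{t_{j+1}}-B^i_{t_j})=0$ (each increment of $B$ is $\Fc^B$-measurable and $B\perp W$) gives $D_\theta I_i(U)=\sum_j(D_\theta\xi_j)(B^i_{t_{j+1}}-B^i_{t_j})=\int_t^T D_\theta U_r\,\overleftarrow{dB^i_r}$; since $\xi_j$ depends on $W$ only through its increments over $[t,t_j]$, one has $D_\theta\xi_j=0$ as soon as $\theta>t_j$, which after passing to the limit explains why the integration domain reduces to $[\theta,T]$ when $\theta>t$ and equals $[t,T]$ when $\theta\le t$.

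Next I would pass to a general $U$. For the implication ``$U_\theta\in\mathbb{D}^{1,2}\Rightarrow I_i(U)\in\mathbb{D}^{1,2}$ with the stated formula'', I would choose elementary processes $U^n$ converging to $U$ both in $\mathbb{H}^2_1([t,T])$ and in the Malliavin norm, that is, $DU^n\to DU$ in $L^2([t,T]^2\times\Omega)$; this is possible by density of elementary processes together with closability of $D$ (see \cite{nual:06}). The backward It\^o isometry---valid here after conditioning on $\Fc^W$, hence in $L^2(\Omega)$, because $W\perp B$---yields $I_i(U^n)\to I_i(U)$ in $L^2(\Omega)$ and $\int_t^T D_\theta U^n_r\,\overleftarrow{dB^i_r}\to\int_t^T D_\theta U_r\,\overleftarrow{dB^i_r}$ in $L^2([t,T]\times\Omega)$ (as functions of $(\theta,\omega)$); closability of $D$ then gives $I_i(U)\in\mathbb{D}^{1,2}$ and the announced formula.

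The converse implication is where I expect the real work, and I would handle it through the Wiener chaos expansion with respect to $W$ alone, treating $B$ as a parameter (working in $L^2(\Omega^W)\otimes L^2(\Omega^B)$). Writing $U_r=\sum_m U^{(m)}_r$ for the $W$-chaos expansion of $U_r$, the backward $B$-integral leaves the $W$-chaos order unchanged, so $I_i(U)=\sum_m\int_t^T U^{(m)}_r\,\overleftarrow{dB^i_r}$ is precisely the $W$-chaos expansion of $I_i(U)$; the backward It\^o isometry gives $\big\|\int_t^T U^{(m)}_r\,\overleftarrow{dB^i_r}\big\|_{L^2(\Omega)}^2=\E[\int_t^T|U^{(m)}_r|^2\,dr]$, and weighting these identities by $m$ and summing shows that $\sum_m m\,\|\cdot\|^2<\infty$ holds for $I_i(U)$ precisely when $\E[\int_t^T\int_t^T|D_\theta U_r|^2\,d\theta\,dr]<\infty$, that is, precisely when $U_\theta\in\mathbb{D}^{1,2}$ for a.e.\ $\theta$; this settles the equivalence and re-derives the formula. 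The main obstacle throughout is keeping the mixed forward-$W$/backward-$B$ filtration under control---in particular justifying the conditional backward It\^o isometry and the $W$-chaos bookkeeping with $B$ frozen; once that is in place the argument is a routine transcription of the forward case with the simplification noted above.
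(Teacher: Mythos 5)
The paper itself contains no proof of this lemma: it is recalled verbatim from \cite{bach:benl:mato:mnif:16}, so there is no internal argument to compare yours against, and your proposal has to be judged on its own merits. On those merits it is essentially correct and is the natural proof. The first direction is the standard transcription of Lemma \ref{forD12}: for elementary integrands the product rule together with $D_{\theta}(B^i_{t_{j+1}}-B^i_{t_j})=0$ (since $B\perp W$) kills the boundary term that appears in the forward case, and closability of $D$ plus the backward It\^o isometry pass the formula to general $U$; note that the two displayed formulas are really one, since $D_{\theta}U_r=0$ for a.e.\ $r<\theta$ by adaptedness of $U_r$ to $\Fc^W_{0,r}\vee\Fc^B_{r,T}$. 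Your treatment of the converse via the $W$-chaos decomposition with $B$ frozen is also sound and is arguably the cleanest route: because the backward $B$-integral acts only on the second factor of $L^2(\Omega)=L^2(\Omega^W)\otimes L^2(\Omega^B)$, it commutes with the projections $J_m^W\otimes I$ and is an isometry on each chaos level, which yields $\|I_i(U)\|_{1,2}^2=\sum_m(1+m)\,E\int_t^T|U^{(m)}_r|^2dr=\int_t^T E\|U_r\|_{1,2}^2\,dr$ up to constants, hence the equivalence and the formula simultaneously.

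Two caveats, neither fatal. First, the ``for each $\theta\in[0,T]$'' in the statement must be read as ``for a.e.\ $\theta$, with $E\int_t^T\int_t^T|D_\theta U_r|^2\,dr\,d\theta<\infty$'' (membership of $U$ in $L^2([t,T],\mathbb{D}^{1,2})$); this is exactly what your chaos identity delivers, and pointwise membership of a single $U_\theta$ in $\mathbb{D}^{1,2}$ could not by itself control $I_i(U)$ anyway, so make sure your write-up states the equivalence in that form. Second, the two facts you invoke somewhat loosely --- the backward It\^o isometry for integrands measurable with respect to $\Fc^W_{0,r}\vee\Fc^B_{r,T}$ (the increment $B_{t_{j+1}}-B_{t_j}$ is independent of $\Fc^W\vee\Fc^B_{t_{j+1},T}$, which is cleaner than conditioning on $\Fc^W$), the commutation of $J_m^W$ with the backward integral, and the preservation of the mixed measurability by $J_m^W$ (conditional expectation onto $\Fc^W_{0,r}\vee\Fc^B_{r,T}$ commutes with $W$-chaos projections) --- should be spelled out through the tensor-product structure; they are routine but they are precisely where the ``mixed filtration bookkeeping'' you mention lives.
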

The following result will be needed to prove Proposition \ref{propderiv1}. It can be proved using the same arguments as in the classical BSDEs' setting (see \cite{elka:peng:quen:97}).
\begin{Proposition}\label{aprioriest}
Let  $(\phi^1,f^1,h^1)$ and
$(\phi^2,f^2,h^2)$ be two standard parameters of the BDSDE \eqref{1} and
$(Y^1,Z^1)$ and $(Y^2,Z^2)$ the associated solutions. Let assumptions {\bf (H1)} and {\bf (H2)} hold. For $s\in [t,T]$,
set $\delta Y_s:=Y^1_s-Y^2_s$,
$\delta_2f_s:=f^1(s,X_s,Y^2_s,Z^2_s)-f^2(s,X_s,Y^2_s,Z^2_s)$ and $\delta_2h_s:=h^1(s,X_s,Y^2_s,Z^2_s)-h^2(s,X_s,Y^2_s,Z^2_s)$.
Then, we have
\begin{eqnarray}\label{apriori2}
||\delta Y||_{\mathbb S^2_d([t,T])}^2+||\delta Z||_{\mathbb H^2_{d\times k}([t,T])}^2
\leq C E[|\delta Y_T|^2+\int_t^T|\delta_2f_s|^2ds+\int_t^T |\delta_2h_s|^2ds],
\end{eqnarray}
where $C$ is a positive constant depending only on $K$, $T$ and $\alpha$.
\end{Proposition}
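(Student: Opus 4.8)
The plan is to mimic the classical a priori estimate for BSDEs (as in El Karoui--Peng--Quenez) but carefully track the extra backward stochastic integral term coming from $h$, using the smallness constant $\alpha<1$ to absorb it. Write $\delta Y_s = Y^1_s - Y^2_s$, $\delta Z_s = Z^1_s - Z^2_s$. Subtracting the two BDSDEs, $\delta Y$ solves, for $t\le s\le T$,
\begin{equation*}
\delta Y_s = \delta Y_T + \int_s^T \big(\delta_1 f_r + \delta_2 f_r\big)\dr + \int_s^T \big(\delta_1 h_r + \delta_2 h_r\big)\overleftarrow{\dB_r} - \int_s^T \delta Z_r \dW_r,
\end{equation*}
where $\delta_1 f_r := f^1(r,X_r,Y^1_r,Z^1_r) - f^1(r,X_r,Y^2_r,Z^2_r)$ and similarly for $\delta_1 h_r$; by {\bf (H2)(i)--(ii)} these are Lipschitz in $(\delta Y, \delta Z)$, with the crucial feature that $|\delta_1 h_r|^2 \le K|\delta Y_r|^2 + \alpha^2 |\delta Z_r|^2$.

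The key step is to apply It\^o's formula to $|\delta Y_s|^2$. Compared to the standard BSDE computation there is an additional nonnegative bracket term $\int_s^T |\delta_1 h_r + \delta_2 h_r|^2 \dr$ appearing with a $+$ sign on the right-hand side (because of the backward It\^o integral with respect to $B$), plus the usual $-\int_s^T|\delta Z_r|^2\dr$ from the $W$-martingale. Taking expectations kills the two genuine martingale terms, leaving
\begin{equation*}
E[|\delta Y_s|^2] + E\!\int_s^T |\delta Z_r|^2 \dr = E[|\delta Y_T|^2] + 2E\!\int_s^T \langle \delta Y_r, \delta_1 f_r + \delta_2 f_r\rangle \dr + E\!\int_s^T |\delta_1 h_r + \delta_2 h_r|^2 \dr.
\end{equation*}
Now bound $2\langle \delta Y_r, \delta_1 f_r\rangle \le C|\delta Y_r|^2 + \tfrac14|\delta Z_r|^2$ and $2\langle \delta Y_r, \delta_2 f_r\rangle \le C|\delta Y_r|^2 + |\delta_2 f_r|^2$ by Young's inequality, and use $(a+b)^2 \le (1+\beta)a^2 + (1+\beta^{-1})b^2$ on $|\delta_1 h_r + \delta_2 h_r|^2$ with $\beta$ chosen so that $(1+\beta)\alpha^2 < 1$ — this is where $\alpha<1$ is essential. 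Then the $|\delta Z_r|^2$ terms on the right total at most $\big((1+\beta)\alpha^2 + \tfrac14\big)E\!\int_s^T|\delta Z_r|^2\dr$, which for $\alpha^2$ close enough to $1$ we must handle more carefully: actually we first choose $\beta$ with $(1+\beta)\alpha^2 =: \gamma < 1$, then choose the Young constant for the $\delta_1 f$ term small enough that $\gamma + (\text{that constant}) < 1$, so the $\delta Z$ term absorbs into the left-hand side. This yields
\begin{equation*}
E[|\delta Y_s|^2] + c\, E\!\int_s^T |\delta Z_r|^2 \dr \le E[|\delta Y_T|^2] + C E\!\int_s^T |\delta Y_r|^2 \dr + C\, E\!\int_s^T \big(|\delta_2 f_r|^2 + |\delta_2 h_r|^2\big)\dr
\end{equation*}
for some $c>0$. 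Gr\"onwall's lemma in $s$ then gives the bound on $\Sup_{t\le s\le T} E[|\delta Y_s|^2]$ and hence on $E\!\int_t^T|\delta Z_r|^2\dr$.

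It remains to upgrade the pointwise-in-$s$ bound on $E[|\delta Y_s|^2]$ to the $\mathbb S^2$-norm bound $E[\sup_{t\le s\le T}|\delta Y_s|^2]$. For this I would return to the It\^o formula before taking expectations, take the supremum over $s$, and apply the Burkholder--Davis--Gundy inequality to the two martingale terms $\int_s^T \langle \delta Y_r, (\delta_1 h_r + \delta_2 h_r)\rangle\overleftarrow{\dB_r}$ and $\int_s^T \langle \delta Y_r, \delta Z_r\rangle \dW_r$; the BDG bounds are controlled by $\tfrac12 E[\sup|\delta Y|^2] + C E\!\int_t^T(|\delta Z|^2 + |\delta_1 h|^2 + \cdots)\dr$, and the $\tfrac12 E[\sup|\delta Y|^2]$ absorbs to the left while the remaining terms are already estimated. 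The main obstacle — and the only place the doubly stochastic structure really bites — is the bookkeeping in this absorption argument: one must verify that the coefficient $\alpha<1$ leaves enough room to absorb \emph{both} the $\delta Z$ contribution from the $h$-bracket \emph{and} the contributions from the Young/BDG splittings, which is exactly the kind of constant chasing done in \cite{elka:peng:quen:97} and \cite{pard:peng:94}, so I would simply cite those for the routine parts and only highlight the $\alpha$-dependence.
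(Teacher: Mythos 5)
Your proposal is correct and takes essentially the same route the paper intends: the paper omits the proof entirely, noting only that it follows the classical a priori estimates for BSDEs (El Karoui--Peng--Quenez), and your sketch supplies exactly that adaptation to the doubly stochastic case --- It\^o's formula for $|\delta Y_s|^2$ with the additional positive bracket $\int_s^T|\delta h_r|^2\,\dr$ from the backward integral, absorption of the $\delta Z$ contributions into the left-hand side using $(1+\beta)\alpha^2<1$, Gr\"onwall, and then BDG on both martingale terms for the $\mathbb S^2$-bound. The only remaining work is the routine constant bookkeeping you already flag, so there is no gap.
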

In the next proposition, we prove that the Malliavin derivative of the solution of the BDSDE \eqref{1} is a solution of a linear BDSDE (see \cite{pard:peng:92} for the standard BSDEs' case). The same proposition is proved in \cite{bach:benl:mato:mnif:16} under the assumption that all the coefficients are 2 times continuously differentiable with all partial derivatives uniformly bounded. We give here the proof under weaker assumptions.

\begin{Proposition}\label{propderiv1}
Assume that {\bf(H1)} and {\bf(H2)} hold and that the coefficients $b,\sigma$ and $\Phi$ are in $C_b^{1}$ and $f$ and $h$ are  in $C_b^{0,1}$.
For any $t\in[0,T]$ and $x\in \mathbb{R}^{d}$, let $\{(Y_{s},Z_{s}),t\leq s\leq T\}$
denote the unique solution of the following BDSDE
\begin{eqnarray*}
Y_{s}&=& \Phi(X_{T}^{t,x}) +\int_{s}^{T}f(r,X_{r}^{t,x},Y_{r},Z_{r})dr+ \int_{s}^{T}h(r,X_{r}^{t,x},Y_{r},Z_{r})
\overleftarrow{dB_{r}}\nonumber\\
& -& \int_{s}^{T}Z_{r}dW_{r},\,\,t\leq s\leq T.
\end{eqnarray*}
Then,
 $(Y, Z) \in \Bc^2([t,T],\mathbb{D}^{1,2})$ and $\{D_{\theta}Y_{s},D_{\theta}Z_{s};t\leq s,\theta\leq T\}$ is given by\\
(i) $D_{\theta}Y_{s}=0, D_{\theta}Z_{s}=0$ for all  $t\leq s<\theta\leq T$,\\
(ii) for any fixed $\theta\in [t,T]$, $\theta\leq s\leq T$ and $1\leq i\leq d$, a version of $(D_{\theta}^{i}Y_{s},D_{\theta}^{i}Z_{s})$ is the unique
solution of the following BDSDE
\begin{eqnarray}\label{DY}
D_{\theta}^{i}Y_{s}&=&\nabla\Phi(X_{T}^{t,x})D_{\theta}^{i}X_{T}^{t,x}+\int_{s}^{T}\Big( \nabla_{x}f(r,X_{r}^{t,x},Y_{r},Z_{r})D_{\theta}^{i}X_{r}^{t,x}\Big)dr\nonumber\\
&+&\int_{s}^{T}\Big( \nabla_{y}f(r,X_{r}^{t,x},Y_{r},Z_{r})D_{\theta}^{i}Y_{r}
+\Sum_{j=1}^{d}\nabla_{z^j}f(r,X_{r}^{t,x},Y_{r},Z_{r})
 D_{\theta}^{i}Z_{r}^j\Big)dr\nonumber\\
&+&\Sum_{n=1}^{l}\int_{s}^{T}\Big( \nabla_{x}h^n(r,X_{r}^{t,x},Y_{r},Z_{r})D_{\theta}^{i}X_{r}^{t,x}
+\nabla_{y}h^n(r,X_{r}^{t,x},Y_{r},Z_{r})D_{\theta}^{i}Y_{r}\Big)\overleftarrow{dB_{r}^{n}}\nonumber\\
&+&\Sum_{n=1}^{l}\int_{s}^{T}\Sum_{j=1}^{d}\Big( \nabla_{z^j}h^n(r,X_{r}^{t,x},Y_{r},Z_{r})
D_{\theta}^{i}Z_{r}^j\Big)\overleftarrow{dB_{r}^n}- \int_{s}^{T}\Sum_{j=1}^{d}D_{\theta}^{i}Z_{r}^jdW_{r}^j,
\end{eqnarray}
where $(z^j)_{1\leq j\leq d}$ denotes the j-th column of the matrix $z$,
$(h^n)_{1\leq n\leq l}$ denotes the n-th column of the matrix h and $B=(B^1,\ldots,B^l)$.
\end{Proposition}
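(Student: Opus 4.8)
The plan is to reduce the statement to its analogue in \cite{bach:benl:mato:mnif:16} (which is valid under $C_b^2$-type hypotheses) by a mollification argument, and then to pass to the limit using the a priori estimate of Proposition~\ref{aprioriest} together with the closedness of the Malliavin operator $D$ on $\mathbb D^{1,2}$.

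\medskip\noindent\textbf{Mollification.} Let $(\rho_m)$ be standard mollifiers and set $b^m=b*\rho_m$, $\sigma^m=\sigma*\rho_m$, $\Phi^m=\Phi*\rho_m$, and let $f^m,h^m$ be obtained by convolving $f,h$ in all variables (extending them in time to $\mathbb R$ beforehand). These coefficients are $C_b^\infty$ and satisfy \textbf{(H1)}--\textbf{(H2)} with the \emph{same} constants $K,\alpha<1$ (convolution does not increase Lipschitz/H\"older moduli; in particular $\|\nabla_zh^m\|_\infty\le\alpha<1$), and they converge, together with their first order derivatives, to $b,\sigma,\Phi,f,h$ and their first order derivatives, locally uniformly. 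Denote by $X^m$, $(Y^m,Z^m)$ the forward process and the BDSDE solution attached to the mollified data. By \cite{bach:benl:mato:mnif:16}, $(Y^m_s,Z^m_s)\in\mathbb D^{1,2}$, $(D_\theta Y^m,D_\theta Z^m)$ solves the mollified version of \eqref{DY}, and vanishes for $\theta>s$.

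\medskip\noindent\textbf{Stability of the inputs.} Classical SDE and F-BDSDE stability estimates with respect to the coefficients and to the forward process (in the spirit of Proposition~\ref{aprioriest}) give $X^m\to X$ in $\mathbb S^2_d([t,T])$ and $(Y^m,Z^m)\to(Y,Z)$ in $\mathbb S^2_k([t,T])\times\mathbb H^2_{k\times d}([t,T])$, hence $(X^m,Y^m,Z^m)\to(X,Y,Z)$ in probability. Using the linear SDE \eqref{Mall Deriv X SDE} and Gr\"onwall's lemma one gets $D_\theta X^m\to D_\theta X$ in $L^2(\Omega\times[t,T]^2)$; and since the mollified system has coefficients bounded uniformly in $m$, the argument behind \eqref{aprioriDX} yields $\sup_m\E[\sup_{0\le u\le T}|D_\theta X^m_u|^p]\le C_p(1+|x|^p)$ for every $p<\infty$, so $\{|D_\theta X^m_r|^2\}_m$ is uniformly integrable. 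Finally, since $\nabla_xf,\nabla_yf,\nabla_zf,\nabla_xh,\nabla_yh,\nabla_zh$ are bounded and continuous with $\|\nabla_zh\|_\infty\le\alpha<1$, the linear BDSDE \eqref{DY} \emph{with the true coefficients} (source terms $\nabla_xf(\Theta_r)D_\theta X_r$, $\nabla_xh(\Theta_r)D_\theta X_r\in\mathbb H^2$, terminal $\nabla\Phi(X_T)D_\theta X_T\in L^2$, where $\Theta_r:=(r,X_r,Y_r,Z_r)$) is well posed by \cite{pard:peng:94}; let $(U^\theta,V^\theta)$ denote its solution.

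\medskip\noindent\textbf{Passage to the limit — the main step.} Writing $\Theta^m_r:=(r,X^m_r,Y^m_r,Z^m_r)$, the difference $(D_\theta Y^m-U^\theta,\,D_\theta Z^m-V^\theta)$ solves a \emph{linear} BDSDE whose coefficients are the mollified gradients $\nabla_yf^m(\Theta^m_r),\nabla_zf^m(\Theta^m_r),\nabla_yh^m(\Theta^m_r),\nabla_zh^m(\Theta^m_r)$ (uniformly bounded, with $|\nabla_zh^m|\le\alpha<1$) and whose data are
\begin{align*}
\xi^m&:=\nabla\Phi^m(X^m_T)D_\theta X^m_T-\nabla\Phi(X_T)D_\theta X_T,\\
\rho^m_r&:=\big[\nabla_xf^m(\Theta^m_r)D_\theta X^m_r-\nabla_xf(\Theta_r)D_\theta X_r\big]+\big[\nabla_yf^m(\Theta^m_r)-\nabla_yf(\Theta_r)\big]U^\theta_r+\big[\nabla_zf^m(\Theta^m_r)-\nabla_zf(\Theta_r)\big]\cdot V^\theta_r
\end{align*}
together with the analogous $h$-datum $\tilde\rho^m_r$. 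The key point is that the factors multiplying the gradient differences in $\rho^m_r,\tilde\rho^m_r$ are the \emph{fixed} limit processes $U^\theta,V^\theta$: each bracketed gradient difference is bounded by $2K$ and converges to $0$ in probability (combining the local uniform convergence $\nabla f^m\to\nabla f$, the uniform continuity of $\nabla f$ on balls, the convergence in probability of $\Theta^m_r$ and the uniform $L^2$-bounds), so it is dominated by $2K|U^\theta_r|$, resp. $2K|V^\theta_r|$, which lie in $L^2(\Omega\times[t,T])$; dominated convergence then gives $\rho^m,\tilde\rho^m\to0$ in $L^2$. The terms involving $D_\theta X^m_r$ are split as $[\nabla_xf^m(\Theta^m_r)-\nabla_xf(\Theta_r)]D_\theta X^m_r$ (controlled using the uniform integrability of $\{|D_\theta X^m_r|^2\}$) plus $\nabla_xf(\Theta_r)[D_\theta X^m_r-D_\theta X_r]$ (controlled using $D_\theta X^m\to D_\theta X$ in $L^2$), and likewise $\E|\xi^m|^2\to0$. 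Applying the a priori estimate \eqref{apriori2} to this linear BDSDE — whose constant depends only on $K,T,\alpha$, hence is uniform in $m$ — we obtain
\[
\|D_\theta Y^m-U^\theta\|^2_{\mathbb S^2_k}+\|D_\theta Z^m-V^\theta\|^2_{\mathbb H^2_{k\times d}}\le C\,\E\Big[|\xi^m|^2+\int_t^T\big(|\rho^m_r|^2+|\tilde\rho^m_r|^2\big)\dr\Big]\xrightarrow[m\to\infty]{}0,
\]
and therefore, integrating in $\theta$, $DY^m\to U$ and $DZ^m\to V$ in $L^2(\Omega\times[t,T]^2)$.

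\medskip\noindent\textbf{Conclusion.} Combining $Y^m\to Y$, $Z^m\to Z$ in $\mathbb H^2$ with $DY^m\to U$, $DZ^m\to V$ in $L^2(\Omega\times[t,T]^2)$ and the closedness of $D$, we deduce $Y_u,Z_u\in\mathbb D^{1,2}$ for a.e.\ $u$, $(Y,Z)\in\Bc^2([t,T],\mathbb D^{1,2})$, and $D_\theta Y=U^\theta$, $D_\theta Z=V^\theta$ solve \eqref{DY}; assertion (i) is inherited from the mollified solutions in the $L^2$-limit. I expect the real obstacle to be the passage to the limit: one cannot absorb $\nabla_zf^m(\Theta^m_r)$, $\nabla_zh^m(\Theta^m_r)$ into a source term multiplied by $D_\theta Z^m$, since $D_\theta Z^m$ is only bounded in $\mathbb H^2$ and not uniformly integrable; keeping these $m$-dependent gradients as the bounded \emph{coefficients} of the linear BDSDE and pushing all the perturbations onto the \emph{fixed} limit $(U^\theta,V^\theta,D_\theta X)$ is precisely what makes dominated convergence applicable, while the uniform higher-moment bound on $D_\theta X^m$ takes care of the one genuinely $m$-dependent factor.
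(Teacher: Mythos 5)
Your proposal is correct in substance, but it takes a genuinely different route from the paper. You mollify the coefficients, invoke the result of \cite{bach:benl:mato:mnif:16} (valid under $C_b^2$-type hypotheses) as a black box for the mollified system, and then pass to the limit in the mollification parameter $m$; the paper never mollifies. Instead it works directly under the stated $C_b^1$/$C_b^{0,1}$ hypotheses via Picard iteration: it shows recursively, using Lemmas \ref{forD12} and \ref{backD12}, that each Picard iterate $(Y^m,Z^m)$ belongs to $\Bc^2([t,T],\mathbb{D}^{1,2})$, writes the linear BDSDE satisfied by $(D_\theta Y^{m+1},D_\theta Z^{m+1})$, and passes to the limit in the iteration index. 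The limit mechanism is the same in both arguments: the candidate limit $(Y^\theta,Z^\theta)$ (your $(U^\theta,V^\theta)$) solves the linear BDSDE \eqref{DY} driven by the true coefficients, the difference is estimated through Proposition \ref{aprioriest} whose constant depends only on $K,T,\alpha$ (hence is uniform in the approximation parameter, thanks to $|\nabla_z h|\le\alpha<1$), the perturbation is pushed entirely onto source terms multiplied by fixed $L^2$ processes, and one concludes by boundedness and continuity of the gradients, dominated convergence, and the closability of $D$. What each approach buys: yours avoids re-deriving the Malliavin differentiability of the approximating solutions (delegated to \cite{bach:benl:mato:mnif:16}), but at the price of extra bookkeeping that the paper does not need -- verifying that mollification preserves \textbf{(H1)}--\textbf{(H2)} with the same $K,\alpha$, controlling the perturbed forward process ($X^m\to X$, $D_\theta X^m\to D_\theta X$ in $L^2$, uniform higher moments of $D_\theta X^m$), and converging the terminal data $\nabla\Phi^m(X^m_T)D_\theta X^m_T$ -- and it makes the result depend on the stronger theorem of \cite{bach:benl:mato:mnif:16}, whereas the paper's Picard argument is self-contained under the weaker hypotheses, which is precisely the stated purpose of this proposition.
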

\begin{proof}
To simplify the notations, we restrict ourselves to the case $k=d=l=1$.
$(D_{\theta}Y,D_{\theta}Z)$ is well defined and from inequalities \ref{apriori1} and \ref{aprioriDX}, we deduce that for each $\theta\leq T$
\begin{equation*}
E[\Sup_{t\leq s\leq T}|D_{\theta}Y_s|^2]+E[\int_t^T|D_{\theta}Z_s|^2ds]\leq C (1+|x|^{2}).
\end{equation*}
We define recursively the sequence $(Y^{m},Z^{m})$ as follows. First we set $(Y^{0},Z^{0})=(0,0)$. Then, given $(Y^{m-1},Z^{m-1})$,
we define $(Y^{m},Z^{m})$ as the unique solution in $\mathbb{S}_k^{2}([t,T])\times \mathbb{H}^2_{k\times d}([t,T])$ of
\begin{eqnarray*}
Y_{s}^{m}&=&\Phi(X_{T}^{t,x})+\int_{s}^{T}f(r,X_{r}^{t,x},Y_{r}^{m-1},Z_{r}^{m-1})dr
+\int_{s}^{T}h(r,X_{r}^{t,x},Y_{r}^{m-1},Z_{r}^{m-1})\overleftarrow{dB_{r}}\nonumber\\
&-& \int_{s}^{T}Z_{r}^{m}dW_{r}.
\end{eqnarray*}
We recursively show that $\displaystyle{(Y^{m},Z^{m})\in\mathcal{B}^{2}([t,T],\mathbb{D}^{1,2})}$.
Suppose that 
\begin{align*}
\displaystyle{(Y^{m},Z^{m})\in \Bc^2([t,T],\mathbb{D}^{1,2})}
\end{align*} 
and let us show that
\begin{align*}
(Y^{m+1},Z^{m+1})\in \Bc^2([t,T],\mathbb{D}^{1,2}).
\end{align*}
Set $\displaystyle{\Sigma_{r}^{m}:=(X_{r}^{t,x},Y_{r}^{m},Z_{r}^{m})}$. From the induction assumption, we have 
\begin{align*}
\Phi(X_{T})+\int_{s}^{T}f(r,\Sigma_{r}^{m})dr\in\mathbb{D}^{1,2}.
\end{align*}
We have $\displaystyle{h(r,\Sigma_{r}^{m})\in  \mathbb{D}^{1,2}}$ for all $r\in [t,T]$. From Lemma \ref{backD12},
we have 
\begin{align*}
\int_{t}^{T}h(r,\Sigma_{r}^{m})\overleftarrow{dB_{r}}\in \mathbb{D}^{1,2}.
\end{align*}
Then
\begin{eqnarray*}
Y_{s}^{m+1}=E\big[ \Phi(X_{T}^{t,x})+\int_{s}^{T}f(r,\Sigma_{r}^{m})dr+\int_{s}^{T}h(r,\Sigma_{r}^{m})\overleftarrow{dB_{r}}|\mathcal{F}_{t,s}^{W}\vee \mathcal{F}_{t,T}^{B}\big]\in\mathbb{D}^{1,2}
\end{eqnarray*}
Hence
\begin{eqnarray*}
\int_{t}^{T}Z_{r}^{m+1}dW_{r}=\Phi(X_{T}^{t,x})+\int_{t}^{T}f(r,\Sigma_{r}^{m})dr+
\int_{t}^{T}h(r,\Sigma_{r}^{m})\overleftarrow{dB_{r}}-Y_{t}^{m+1}\in\mathbb{D}^{1,2}.
\end{eqnarray*}
It follows from Lemma \ref{forD12} that $\displaystyle{Z^{m+1}\in \mathcal{M}^{2}_{k\times d}([t,T],\mathbb{D}^{1,2})}$
and we have for  $t\leq s\leq \theta$, $\displaystyle{D_{\theta}Y_{s}^{m+1}= D_{\theta}Z_{s}^{m+1}=0}$, while for $\theta\leq s \leq T$, we have
\begin{eqnarray}\label{eqiter}
&&D_{\theta}Y_{s}^{m+1}=\nabla\Phi(X_{T}^{t,x})D_{\theta}X_{T}^{t,x}\nonumber\\
&&+\int_{s}^{T}\Big( \nabla_{x}f(r,\Sigma_{r}^{m})
 D_{\theta}X_{r}+\nabla_{y}f(r,\Sigma_{r}^{m})D_{\theta}Y_{r}^{m}+\nabla_{z}f(r,\Sigma_{r}^{m})D_{\theta}Z_{r}^{m}\Big)dr\nonumber\\
 &&+\int_{s}^{T}\Big( \nabla_{x}h(r,\Sigma_{r}^{m})
 D_{\theta}X_{r}+ \nabla_{y}h(r,\Sigma_{r}^{m})D_{\theta}Y_{r}^{m}+\nabla_{z}h(r,\Sigma_{r}^{m})
 D_{\theta}Z_{r}^{m}\Big) \overleftarrow{dB_{r}} \nonumber\\
 &&- \int_{s}^{T}D_{\theta}Z_{r}^{m+1}dW_{r}.
\end{eqnarray}
From inequality \ref{apriori1}, we deduce that for each $\theta\leq T$
\begin{equation*}\label{aprioriDYm}
E[\Sup_{t\leq s\leq T}|D_{\theta}Y_s^{m+1}|^2]+\int_t^TE[|D_{\theta}Z_s^{m+1}|^2]ds\leq C (1+|x|^{2}).
\end{equation*}
It is known that inequality \ref{apriori1} holds for $(Y^{m+1},Z^{m+1})$ and we deduce that
\begin{equation*}
\|Y^{m+1}\|_{1,2}+\|Z^{m+1}\|_{1,2}<\infty,
\end{equation*}
which shows that $(Y^{m+1},Z^{m+1})\in \Bc^2([t,T],\mathbb{D}^{1,2})$.
Using the contraction mapping argument as in \cite{elka:peng:quen:97}, we deduce that  $(Y^{m+1},Z^{m+1})$ converges to
$(Y,Z)$ in $\mathbb{S}^2([t,T])\times \mathbb{H}^2([t,T])$.
We will show that $(D_{\theta}Y^{m},D_{\theta}Z^{m})$ converges to $(Y^{\theta},Z^{\theta})$
 in $L^2(\Omega \times [t,T]\times [t,T] ,dP\otimes dt\otimes dt)$, where $Y_{s}^{\theta}=Z_{s}^{\theta}=0$ for all $t\leq s\leq \theta $
and $(Y_{s}^{\theta},Z_{s}^{\theta},\theta\leq s\leq T)$ is the solution
 of the following BDSDE
\begin{eqnarray}\label{eqlim}
Y_{s}^{\theta}&=&\nabla\Phi(X_{T}^{t,x})D_{\theta}X_{T}^{t,x}\nonumber\\
&+&\int_{s}^{T}\Big(\nabla_{x}f(r,\Sigma_{r})
 D_{\theta}X_{r}+\nabla_{y}f(r,\Sigma_{r})Y_{r}^{\theta}+\nabla_{z}f(r,\Sigma_{r})
 Z_{r}^{\theta}\Big)dr\nonumber\\
 &+&\int_{s}^{T}\Big( \nabla_{x}h(r,\Sigma_{r})
 D_{\theta}X_{r}+\nabla_{y}h(r,\Sigma_{r})Y_{r}^{\theta}+\nabla_{z}h(r,\Sigma_{r})
 Z_{r}^{\theta}\Big)\overleftarrow{dB_{r}} \nonumber\\
 &-& \int_{s}^{T}Z_{r}^{\theta}dW_{r}.
\end{eqnarray}
From equations \ref{eqiter} and \ref{eqlim}, we have
\begin{align*}
&D_{\theta}Y_{s}^{m+1}-Y_{s}^{\theta}=
\int_{s}^{T}\Big((\nabla_{x}f(r,\Sigma_{r}^{m})
-\nabla_{x}f(r,\Sigma_{r}))D_{\theta}X_{r}^{t,x}\\
&+\nabla_{y}f(r,\Sigma_{r}^{m})D_{\theta}Y_{r}^{m}
-\nabla_{y}f(r,\Sigma_{r})Y_{r}^{\theta}
+\nabla_{z}f(r,\Sigma_{r}^{m})D_{\theta}Z_{r}^{m}
-\nabla_{z}f(r,\Sigma_{r})Z_{r}^{\theta}\Big)dr\\
&+\int_{s}^{T}\!\!\!\Big((\nabla_{x}h(r,\Sigma_{r}^{m})
\!-\!\nabla_{x}h(r,\Sigma_{r}))D_{\theta}X_{r}^{t,x}
\!+\!\nabla_{y}h(r,\Sigma_{r}^{m})D_{\theta}Y_{r}^{m}
\!-\!\nabla_{y}h(r,\Sigma_{r})Y_{r}^{\theta}\Big)\overleftarrow{dB_{r}}\\
&+\int_{s}^{T}\Big(\nabla_{z}h(r,\Sigma_{r}^{m})D_{\theta}Z_{r}^{m}
-\nabla_{z}h(r,\Sigma_{r})Z_{r}^{\theta}\Big)\overleftarrow{dB_{r}}\\
&-\int_{s}^{T}(D_{\theta}Z_{r}^{m+1}-Z_{r}^{\theta})dW_{r}.
\end{align*}
From Proposition \ref{aprioriest}, we have
\begin{align*}
&E[\sup_{\theta \leq s\leq T}|D_{\theta}Y_{s}^{m+1}-Y_{s}^{\theta}|^{2}]+E[\int_{s}^{T}|D_{\theta}Z_{r}^{m+1}-Z_{r}^{\theta}|^{2}dr]\\
&\leq CE\Big[\int_{s}^{T}\Big|(\nabla_{x}f(r,\Sigma_{r}^{m})
-\nabla_{x}f(r,\Sigma_{r}))D_{\theta}X_{r}^{t,x}+\nabla_{y}f(r,\Sigma_{r}^{m})Y_{r}^{\theta}
-\nabla_{y}f(r,\Sigma_{r})Y_{r}^{\theta}\\
&+\nabla_{z}f(r,\Sigma_{r}^{m})Z_{r}^{\theta}
-\nabla_{z}f(r,\Sigma_{r})Z_{r}^{\theta}\Big|^2dr\Big]\\
&+CE\Big[\int_{s}^{T}\Big|(\nabla_{x}h(r,\Sigma_{r}^{m})
-\nabla_{x}h(r,\Sigma_{r}))D_{\theta}X_{r}
+\nabla_{y}h(r,\Sigma_{r}^{m})Y_{r}^{\theta}
-\nabla_{y}h(r,\Sigma_{r})Y_{r}^{\theta}\\
&+\nabla_{z}h(r,\Sigma_{r}^{m})Z_{r}^{\theta}
-\nabla_{z}h(r,\Sigma_{r})Z_{r}^{\theta}\Big|^{2}dr\Big].
\end{align*}
Therefore, we obtain
\begin{align}\label{YmYth}
&E\Big[\int_t^T\int_t^T|D_{\theta}Y_s^{m+1}-Y^{\theta}_s|^{2}dsd\theta
+ \int_t^T\int_t^T|D_{\theta}Z_s^{m+1}-Z^{\theta}_s|^{2}dsd\theta\Big]\nonumber\\
&\leq C E\Big[\int_t^T\int_{t}^{T}|\delta_{r,\theta}^{m}|^2dr d\theta +\int_t^T\int_{t}^{T}|\rho_{r,\theta}^{m}|^2dr d\theta\Big],
\end{align}
where
\begin{align*}
\delta_{r,\theta}^{m}&=(\nabla_{x}f(r,\Sigma_{r}^{m})
-\nabla_{x}f(r,\Sigma_{r}))D_{\theta}X_{r}^{t,x}
+\nabla_{y}f(r,\Sigma_{r}^{m})Y_{r}^{\theta}
-\nabla_{y}f(r,\Sigma_{r})Y_{r}^{\theta}\\
&+\nabla_{z}f(r,\Sigma_{r}^{m})Z_{r}^{\theta}
-\nabla_{z}f(r,\Sigma_{r})Z_{r}^{\theta}
\end{align*}
and
\begin{align*}
\rho_{r,\theta}^{m}&=(\nabla_{x}h(r,\Sigma_{r}^{m})
-\nabla_{x}h(r,\Sigma_{r}))D_{\theta}X_{r}^{t,x}
+\nabla_{y}h(r,\Sigma_{r}^{m})Y_{r}^{\theta}
-\nabla_{y}h(r,\Sigma_{r})Y_{r}^{\theta}\\
&+\nabla_{z}h(r,\Sigma_{r}^{m})Z_{r}^{\theta}
-\nabla_{z}h(r,\Sigma_{r})Z_{r}^{\theta}.
\end{align*}
From the definition of  $(\delta_{r,\theta}^{m})_{t\leq r,\theta\leq T}$, we have
\begin{align*}
E\Big[\int_t^T\int_{t}^{T}|\delta_{r,\theta}^{m}|^2dr d\theta\Big]\leq C
\int_t^T(A_{m}(\theta,t,T)+B_{m}(\theta,t,T))d\theta,
\end{align*} 
 where
\begin{eqnarray*}
A_{m}(\theta,t,T)&=&E\Big[\int_{t}^{T}|(\nabla_{x}f(r,\Sigma_{r}^{m})
-\nabla_{x}f(r,\Sigma_{r}))D_{\theta}X_{r}^{t,x}|^2 dr\Big],\\
B_{m}(\theta,t,T)&=&E\Big[\int_{t}^{T}\big|(\nabla_{y}f(r,\Sigma_{r})
-\nabla_{y}f(r,\Sigma_{r}^{m}))Y_{r}^{\theta}\big|^2dr\Big]\\
&+&E\Big[\int_{t}^{T}\big|(\nabla_{z}f(r,\Sigma_{r})
-\nabla_{z}f(r,\Sigma_{r}^{m}))Z_{r}^{\theta}\big|^2dr\Big].
\end{eqnarray*}
Moreover, since $\nabla_{x}f$ is bounded and continuous with respect to $(x,y,z)$,
it follows by the dominated convergence theorem and inequality \ref{integrability}  that
\begin{equation}\label{A}
\lim_{m\rightarrow \infty} \int_t^TA_{m}(\theta,t,T)d\theta=0.
\end{equation}
Furthermore, since $\nabla_{y}f$ and $\nabla_{z}f$ are bounded and continuous with respect to $(x,y,z)$,
it follows also by the dominated convergence theorem and inequality \ref{apriori1} that
\begin{equation}\label{B}
\lim_{m \rightarrow \infty}\int_t^TB_{m}(\theta,t,T)d\theta=0.
\end{equation}
From the definition of  $(\rho_{r,\theta}^{m})_{s\leq r,\theta\leq T}$, we have
\begin{align*}
E\Big[\int_t^T\int_{t}^{T}|\rho_{r,\theta}^{m}|^2dr d\theta\Big]\leq C \int_t^T(A'_{m}(\theta,t,T)+B'_{m}(\theta,t,T))d\theta,
\end{align*}
with
\begin{eqnarray*}\label{ABp}
A'_{m}(\theta,t,T)&=&E\Big[\int_{t}^{T}|(\nabla_{x}h(r,\Sigma_{r}^{m})
-\nabla_{x}h(r,\Sigma_{r}))D_{\theta}X_{r}^{t,x}|^2 dr\Big],\\
B'_{m}(\theta,t,T)&=&E\Big[\int_{t}^{T}\big|(\nabla_{y}h(r,\Sigma_{r})
-\nabla_{y}h(r,\Sigma_{r}^{m}))Y_{r}^{\theta}\big|^2dr\big]\\
&+&E\Big[\int_{t}^{T}\big|(\nabla_{z}h(r,\Sigma_{r})
-\nabla_{z}h(r,\Sigma_{r}^{m}))Z_{r}^{\theta}\big|^2dr\Big].\nonumber
\end{eqnarray*}
Similarly as shown above, since $\nabla_{y}h$ and $\nabla_{z}h$ are bounded and continuous with respect to $(x,y,z)$ we can
show that
\begin{eqnarray}\label{ABp}
\lim_{m\rightarrow \infty}\int_t^TA'_{m}(\theta,t,T)d\theta
=\lim_{m\rightarrow \infty}\int_t^T B'_{m}(\theta,t,T) d\theta=0.
\end{eqnarray}
Using \ref{A}, \ref{B} and \ref{ABp} in the estimate \ref{YmYth}, we deduce that
\begin{eqnarray*}
\lim_{m\rightarrow \infty}E\Big[\int_t^T\int_t^T|D_{\theta}Y_s^{m+1}-Y^{\theta}_s|^{2}dsd\theta
+ \int_t^T\int_t^T|D_{\theta}Z_s^{m+1}-Z^{\theta}_s|^{2}dsd\theta \Big]=0.
\end{eqnarray*}
It follows that $(Y^{m},Z^{m})$ converges to $(Y,Z)$ in $L^2([t,T],\mathbb{D}^{1,2}\times \mathbb{D}^{1,2})$
and a version of $(D_{\theta}Y,D_{\theta}Z)$ is given by $(Y^{\theta},Z^{\theta})$, which is the desired result.
\end{proof}
\ep\\

\subsection{Representation and path regularity results for the martingale component of the solution of the F-BDSDE}
In this subsection, we prove a representation result for the martingale component $Z$ (that implies a path regularity result) which will be useful to prove the $L^2$-regularity of the solution of the F-BDSDE.
\begin{Proposition}\label{DY version of Z}
Let assumptions {\bf(H1)} and {\bf(H2)} hold and assume that the coefficients $b,\sigma$ and $\Phi$ are in $C_b^{1}$ and $f$ and $h$ are  in $C_b^{0,1}$. Then, 
$\{D^i_sY_s^{t,x},t \leq s \leq T \}$ is a version of $\{ (Z_s^{t,x})_i ,t \leq s \leq T  \}$, where $(Z_s^{t,x})_i$ denotes the $i-th$ component of the matrix $Z_s^{t,x}$.
\end{Proposition}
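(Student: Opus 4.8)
The plan is to identify $D_s Y_s$ with $Z_s$ by exploiting the BDSDE satisfied by the Malliavin derivatives, established in Proposition \ref{propderiv1}. First I would recall that, by Proposition \ref{propderiv1}, for fixed $\theta\in[t,T]$ the pair $(D^i_\theta Y_s, D^i_\theta Z_s)_{\theta\le s\le T}$ solves the linear BDSDE \eqref{DY}, and that $D^i_\theta Y_s = D^i_\theta Z_s = 0$ for $s<\theta$. The key observation is that this linear BDSDE \eqref{DY} has exactly the same structure (same driver $f$, same $h$, same terminal condition) as the linear BDSDE solved by the process $(\nabla_x Y_s\, D^i_\theta X_r,\ \dots)$ — more precisely, one checks that the forward flow $X^{t,x}$ is Malliavin differentiable and that, loosely, $D^i_\theta Y_s$ for $\theta\le s$ "propagates" the perturbation $D^i_\theta X$ through the same equations that govern $(Y,Z)$. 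I would then restrict to the diagonal $\theta = s$ and compare.

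The core technical step is the following. Apply the representation from Proposition \ref{propderiv1} and take $\theta \uparrow s$: by the path regularity of the Malliavin derivatives (which follows from the a priori estimates of Proposition \ref{aprioriest} applied to the linear BDSDE \eqref{DY}, together with the continuity estimate \eqref{aprioriDX2} for $D_s X$), the map $\theta \mapsto D^i_\theta Y_s$ is continuous in $L^2$ on $[t,s]$, so $D^i_s Y_s := \lim_{\theta\uparrow s} D^i_\theta Y_s$ is well defined. On the other hand, writing the BDSDE \eqref{1} between $s$ and $s+\varepsilon$ and applying the Malliavin derivative operator $D_s$, using Lemma \ref{forD12} for the $dW$ integral (which contributes the extra boundary term $Z_s$) and Lemma \ref{backD12} for the $\overleftarrow{dB}$ integral (which contributes no boundary term since the Malliavin derivative is taken with respect to $W$), one obtains a relation in which $D_s\!\left(\int_s^T Z_r dW_r\right) = \int_s^T D_s Z_r\, dW_r + Z_s$. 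Matching the $dW$-martingale parts, the stochastic-integral terms cancel against those coming from $D_s Y_s$ solving \eqref{DY}, and what remains is precisely $(Z_s)_i = D^i_s Y_s$ a.e.

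Concretely, I would run the argument as follows: (1) From Proposition \ref{propderiv1}, $(Y,Z)\in \Bc^2([t,T],\mathbb{D}^{1,2})$, so $D_\theta Z_s$ is well defined and the representation \eqref{DY} holds. (2) Fix $s$ and write the BDSDE on $[\theta, T]$ for $(D^i_\theta Y, D^i_\theta Z)$ and for $(Y,Z)$ on $[s,T]$, and take $\theta = s$; apply $D^i_s$ directly to the integral form of \eqref{1}, using Lemmas \ref{forD12} and \ref{backD12} to differentiate the two stochastic integrals, and using the chain rule (valid since $f,h\in C_b^{0,1}$, $\Phi\in C_b^1$, and $X\in \mathbb{D}^{1,2}$) to differentiate the $ds$ and $\overleftarrow{dB}$ Lebesgue/Stratonovich-type integrands. (3) Conclude that the process $s\mapsto D^i_s Y_s$ and $s\mapsto (Z_s)_i$ solve the same equation / differ by a term whose $dW$-integral vanishes, forcing $D^i_s Y_s = (Z_s)_i$ for a.e.\ $s$, $\P$-a.s. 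The main obstacle I anticipate is the justification of evaluating the Malliavin derivative on the diagonal $\theta = s$: one must argue that $D_\theta Y_s$ admits a jointly measurable version that is continuous in $\theta$ up to $\theta = s$ in the appropriate $L^2$ sense, so that the boundary term $Z_s$ produced by Lemma \ref{forD12} is correctly matched; this is handled by combining the linear-BDSDE a priori bound \eqref{apriori2} with the forward estimates \eqref{aprioriDX}--\eqref{aprioriDX2}, exactly as in the classical BSDE case of \cite{pard:peng:92, elka:peng:quen:97}, the only new ingredient being the backward integral term, which contributes no boundary term thanks to Lemma \ref{backD12}.
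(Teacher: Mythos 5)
Your proposal is correct and follows essentially the same route as the paper: apply the Malliavin derivative to the equation via Proposition \ref{propderiv1} together with Lemmas \ref{forD12} and \ref{backD12}, so that the forward It\^o integral produces the boundary term $Z$ while the backward integral produces none, and then evaluate on the diagonal $\theta=s$. The paper handles the diagonal more directly than your limiting argument, by rewriting the BDSDE forward in time on $[t,s]$ so that for $t<\theta\leq s$ one gets $D_\theta Y_s = Z_\theta + \int_\theta^s(\cdots)\,dW - \int_\theta^s(\cdots)\,dr - \int_\theta^s(\cdots)\,\overleftarrow{dB}$, and setting $\theta=s$ kills all remaining integrals at once.
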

\begin{proof}
To simplify the notations, we restrict ourselves to the case $k=d=1$.\\
 Notice that for $t\leq s$, we have
\begin{eqnarray*}
Y_{s}^{t,x}=Y_{t}^{t,x}-\int_{t}^{s}f(r,\Sigma_{r}^{t,x})dr - \int_{t}^{s}h(r,\Sigma_{r}^{t,x})
\overleftarrow{dB_{r}}+\int_{t}^{s}Z_{r}^{t,x}dW_{r},
\end{eqnarray*}
where $\Sigma_{r}^{t,x}:=(X_{r}^{t,x},Y_{r}^{t,x},Z_{r}^{t,x})$.\\
It follows from Lemma \ref{forD12} and Lemma \ref{backD12} that, for $t<\theta \leq s$
\begin{align*}
D_{\theta}Y_{s}^{t,x}&=Z_{\theta}^{t,x}
+ \int_{\theta}^{s}D_{\theta}Z_{r}^{t,x}dW_{r}\\
&- \int_{\theta}^{s}\Big( \nabla_{x}f(r,\Sigma_{r}^{t,x})D_{\theta}X_{r}^{t,x}
+ \nabla_{y}f(r,\Sigma_{r}^{t,x})D_{\theta}Y_{r}^{t,x}+\nabla_{z}f(r,\Sigma_{r}^{t,x})
D_{\theta}Z_{r}^{t,x}\Big)dr\\
&-\int_{\theta}^{s}\Big( \nabla_{x}h(r,\Sigma_{r}^{t,x})
D_{\theta}X_{r}^{t,x}+\nabla_{y}h(r,\Sigma_{r}^{t,x})D_{\theta}Y_{r}^{t,x}+\nabla_{z}h(r,\Sigma_{r}^{t,x})
D_{\theta}Z_{r}^{t,x}\Big)\overleftarrow{dB_{r}}.
\end{align*}
The result follows by taking $\theta=s$.
\end{proof}
\ep
\begin{Corollary}
Let assumptions {\bf(H1)} and {\bf(H2)} hold and assume that the coefficients $b,\sigma$ and $\Phi$ are in $C_b^{1}$ and $f$ and $h$ are  in $C_b^{0,1}$. Then, for any $0 \leq t \leq s \leq T$ and $x \in \R^d$,
\begin{equation}\label{Representation of Z}
Z_s^{t,x}= \nabla Y^{t,x}_s [\nabla X_s^{t,x}]^{-1} \sigma ( X_s^{t,x}).
\end{equation}
In particular, $Z^{t,x}$ has continuous paths.
\end{Corollary}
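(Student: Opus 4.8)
### Proof plan for the Corollary

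The plan is to combine the representation $Z_s^{t,x}=D^i_sY_s^{t,x}$ from Proposition \ref{DY version of Z} with the flow property of the forward SDE \eqref{forward} and of the BDSDE \eqref{1}, exactly as in the classical BSDE situation. First I would recall the standard fact (following from {\bf (H1)} and the $C_b^1$ assumption on $b,\sigma$, see Nualart \cite{nual:06} or Kunita \cite{kuni:84}) that $x\mapsto X_s^{t,x}$ is continuously differentiable and its Jacobian $\nabla X_s^{t,x}$ solves the variational linear SDE obtained by formally differentiating \eqref{forward} in $x$; moreover $\nabla X_s^{t,x}$ is invertible for every $s$, with $[\nabla X_s^{t,x}]^{-1}$ satisfying the associated dual linear SDE. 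Comparing this variational equation with the Malliavin equation \eqref{Mall Deriv X SDE} from Proposition \ref{propderiv1}'s setup, one gets the classical identity $D_\theta^l X_s^{t,x}=\nabla X_s^{t,x}\,[\nabla X_\theta^{t,x}]^{-1}\sigma^l(X_\theta^{t,x})$ for $t\le\theta\le s$, since both sides solve the same linear SDE on $[\theta,T]$ with the same initial value $\sigma^l(X_\theta^{t,x})$ at time $\theta$ (the initial value being read off from \eqref{Mall Deriv X SDE}).

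Next I would establish the analogous relation for the backward component, namely $D_\theta^i Y_s^{t,x}=\nabla Y_s^{t,x}\,[\nabla X_\theta^{t,x}]^{-1}\sigma^i(X_\theta^{t,x})$ for $t\le\theta\le s$. Here $\nabla Y_s^{t,x}$ denotes the spatial gradient of the (Markovian) map $x\mapsto Y_s^{t,x}$, which is well defined and is the first component of the solution of the BDSDE obtained by differentiating \eqref{1} in $x$; this follows from the $C_b^1$/$C_b^{0,1}$ regularity of the coefficients together with the a priori estimate of Proposition \ref{aprioriest} (the differentiated BDSDE has the same structure as \eqref{DY} but with $D_\theta^i X_r^{t,x}$ replaced by $\nabla X_r^{t,x}$). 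The point is that the pair $(D_\theta^i Y, D_\theta^i Z)$ solves the linear BDSDE \eqref{DY}, whose coefficients involve $D_\theta^i X_r^{t,x}$ only through the inhomogeneous terms $\nabla_x f\cdot D_\theta^i X_r^{t,x}$, $\nabla_x h^n\cdot D_\theta^i X_r^{t,x}$ and the terminal condition $\nabla\Phi(X_T^{t,x})D_\theta^i X_T^{t,x}$. Substituting $D_\theta^i X_r^{t,x}=\nabla X_r^{t,x}[\nabla X_\theta^{t,x}]^{-1}\sigma^i(X_\theta^{t,x})$ and using linearity and uniqueness (Proposition \ref{aprioriest}), one identifies the solution as $(\nabla Y_r^{t,x}, \nabla Z_r^{t,x})[\nabla X_\theta^{t,x}]^{-1}\sigma^i(X_\theta^{t,x})$, because $(\nabla Y,\nabla Z)$ solves the BDSDE with the $[\nabla X_\theta^{t,x}]^{-1}\sigma^i(X_\theta^{t,x})$ factor stripped off.

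Finally, I would set $\theta=s$ in the identity $D_s^i Y_s^{t,x}=\nabla Y_s^{t,x}[\nabla X_s^{t,x}]^{-1}\sigma^i(X_s^{t,x})$ and invoke Proposition \ref{DY version of Z}, which says precisely that $D_s^i Y_s^{t,x}$ is a version of the $i$-th component of $Z_s^{t,x}$; assembling the columns gives \eqref{Representation of Z}. Continuity of paths of $Z^{t,x}$ is then immediate: $s\mapsto X_s^{t,x}$ has continuous paths, $\sigma\in C_b^1$ is continuous, $s\mapsto\nabla X_s^{t,x}$ and $s\mapsto[\nabla X_s^{t,x}]^{-1}$ are continuous (solutions of SDEs), and $s\mapsto\nabla Y_s^{t,x}$ is continuous since $\nabla Y$ lives in $\mathbb{S}^2$. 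The main obstacle I anticipate is the rigorous justification of the differentiability of $x\mapsto Y_s^{t,x}$ and of the fact that its gradient solves the expected linear BDSDE with the claimed terminal and driver data — this requires a Picard-iteration / stability argument parallel to the one carried out in the proof of Proposition \ref{propderiv1}, controlling the increments via Proposition \ref{aprioriest} and the dominated convergence arguments used there; everything else is bookkeeping with linear SDEs and uniqueness.
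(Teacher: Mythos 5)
Your proposal is correct and follows essentially the same route as the paper: identify $D_\theta X_s^{t,x}=\nabla X_s^{t,x}[\nabla X_\theta^{t,x}]^{-1}\sigma(X_\theta^{t,x})$ by uniqueness for the linear SDE \eqref{Mall Deriv X SDE}, then use linearity and uniqueness for the BDSDE \eqref{DY} to get $D_\theta Y_s^{t,x}=\nabla Y_s^{t,x}[\nabla X_\theta^{t,x}]^{-1}\sigma(X_\theta^{t,x})$, set $\theta=s$, and conclude with Proposition \ref{DY version of Z} and the continuity of the factors. The only difference is that the "main obstacle" you flag (differentiability of $x\mapsto Y_s^{t,x}$) is sidestepped in the paper, which simply defines $(\nabla Y^{t,x},\nabla Z^{t,x})$ as the solution of the linear BDSDE \eqref{Grad Y} rather than as an actual spatial derivative, which is all the representation requires.
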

\begin{proof}
Recall that the matrix $\nabla X_s= \Big( \frac{\partial X^i_s }{ \partial x^j}\Big)_{1\leq i,j \leq d}$ solves the SDE
\begin{eqnarray}\label{Grad X}
\nabla X_{s}^{t,x} = I_d + \int_{t}^{s}\nabla b(X_{u}^{t,x})\nabla X_{u}^{t,x}du
+\int_{t}^{s}\nabla \sigma(X_{u}^{t,x})\nabla X_{u}^{t,x}dW_{u}.
\end{eqnarray}
From the uniqueness of the solution of the SDE \eqref{Mall Deriv X SDE} satisfied by $D_{\theta} X_{s}^{t,x}$, it follows that
\begin{equation}\label{DX formula}
D_{\theta} X_{s}^{t,x}= \nabla X_s^{t,x} [\nabla X_{\theta}^{t,x}]^{-1} \sigma ( X_{\theta}^{t,x}).
\end{equation}
Now, consider the equation
\begin{eqnarray}\label{Grad Y}
&&\nabla Y_{s}^{t,x}=\nabla \Phi(X_{T}^{t,x})\nabla X_{T}^{t,x}
+\int_{s}^{T}\Big( \nabla_{x}f(r,X_{r}^{t,x},Y_{r}^{t,x},Z_{r}^{t,x})\nabla X_{r}^{t,x} \nonumber\\
&+&  \nabla_{y}f(r,X_{r}^{t,x},Y_{r}^{t,x},Z_{r}^{t,x})\nabla Y_{r}^{t,x} +  \nabla_{z}f(r,X_{r}^{t,x},Y_{r}^{t,x},Z_{r}^{t,x}) \nabla Z_{r}^{t,x}\Big)dr \nonumber\\
&+& \int_{s}^{T}\Big( \nabla_{x}h(r,X_{r}^{t,x},Y_{r}^{t,x},Z_{r}^{t,x})\nabla X_{r}^{t,x}
+\!\!\! \nabla_{y}h(r,X_{r}^{t,x},Y_{r}^{t,x},Z_{r}^{t,x})\nabla Y_{r}^{t,x} \nonumber\\
&+& \nabla_{z}h(r,X_{r}^{t,x},Y_{r}^{t,x},Z_{r}^{t,x})
\nabla Z_{r}^{t,x} \Big)\overleftarrow{dB_{r}}- \!\int_{s}^{T}\!\!\!\nabla \! Z_{r}^{t,x} dW_{r}.
\end{eqnarray}
Denote by $(\nabla X^{t,x}, \nabla Y^{t,x},\nabla Z^{t,x})$ the solution of the F-BDSDE \eqref{Grad X}-\eqref{Grad Y}. From the uniqueness of the solution of BDSDE \eqref{DY} and the formula \eqref{DX formula}, we deduce that 
\begin{equation}\label{DY formula}
D_{\theta} Y_{s}^{t,x}= \nabla Y_s^{t,x} [\nabla X_{\theta}^{t,x}]^{-1} \sigma ( X_{\theta}^{t,x}).
\end{equation}
Thus
\begin{equation*}\label{DY formula}
D_{s} Y_{s}^{t,x}= \nabla Y_s^{t,x} [\nabla X_s^{t,x}]^{-1} \sigma ( X_s^{t,x}).
\end{equation*}
By Proposition \ref{DY version of Z}, the representation \eqref{Representation of Z} follows. The continuity of $Z^{t,x}$ follows from that of $D_{s} Y_{s}^{t,x}$, which follows from that of $\nabla Y_s^{t,x} $, $\nabla X_s^{t,x} $ and $X_s^{t,x}$.
\end{proof}
\ep

\subsection{Zhang $L^2$-Regularity result under globally Lipschitz continuous assumptions}\label{ssubection Zhang regularity}
In this subsection, we prove the Zhang $L^2$-regularity result for the solution of the F-BDSDE \eqref{forward}-\eqref{1} under globally Lipschitz continuous assumptions on the coefficients. Thus, we extend the results of Zhang \cite{ zhan:04} on F-BSDEs to the doubly stochastic framework.
The following lemma gives estimates and stability results (after a perturbation on the coefficients) for the solution of a F-BDSDE. Its proof is omitted since it is based on technics which are classical in BSDEs' theory.
\begin{Lemma}\label{Lemma stability non markovian }
Assume that assumptions {\bf (H1)} and {\bf(H2)} hold. Let $\displaystyle{(X,Y,Z)}$ denote the solution of the F-BDSDE \eqref{forward}-\eqref{1}.
Then we have the following: \\
(i) $L^p$ estimates: For all $p\geq 2 $, there exists a constant $C_p$ depending only on $T, K, \alpha$ and $p$ such that
\begin{align}\label{Yts Estimation}
E\Big[\Sup_{0 \leq s \leq T}|Y_{s}|^{p} +  \Big(\int_{0}^{T}|Z_{s}|^{2}ds\Big)^{\frac{p}{2}}\Big] &\leq C_p  E\Big[ |\Phi(X_T)|^p +  \int_{0}^{T}|f(s,X_s,0,0)|^p ds\nonumber\\ 
&+ \int_{0}^{T}|h(s,X_s,0,0)|^p ds \Big]
\end{align}
and
\begin{align}\label{Ys-Yt Estimation}
E\Big[|Y_{s}-Y_{t}|^{p} \Big] &\leq C_p \Big\{  E \Big[ |\Phi(X_T)|^p +  \Sup_{0 \leq s \leq T}|f(s,X_s,0,0)|^p  \nonumber\\
&+ \Sup_{0 \leq s \leq T}|h(s,X_s,0,0)|^p ds \Big]|s-t|^{p-1}+  E\Big[\Big(\int_{t}^{s}|Z_{u}|^{2}du\Big)^{\frac{p}{2}}\Big] \Big\}.
\end{align}
(ii)Stability result: Let $(X^{\epsilon},Y^{\epsilon},Z^{\epsilon})$ denote the solution of the perturbed F-BDSDE \eqref{forward}-\eqref{1} with coefficients replaced by $b^\epsilon,\sigma^\epsilon,f^\epsilon$, $h^\epsilon$ and $\Phi^{\epsilon}$ and initial condition replaced by $x^\epsilon$. Assume that $b^\epsilon,\sigma^\epsilon,f^\epsilon$, $h^\epsilon$ and $\Phi^{\epsilon}$ satisfy assumptions {\bf (H1)} and {\bf(H2)}, that $\displaystyle{\lim_{\epsilon \longrightarrow 0} x^\epsilon =x}$ and that for fixed (x,y,z) in $\mathbb{R}^{d}\times\mathbb{R}^{k}\times\mathbb{R}^{k\times d}$
\begin{align*}
&\lim_{\epsilon \longrightarrow 0} |b^\epsilon(x) - b(x)|^{2} + |\sigma^\epsilon(x) - \sigma(x)|^{2} = 0,\\
&\lim_{\epsilon \longrightarrow 0} \! |\Phi^{\epsilon}(x) - \Phi(x)|^{2}\!\! +\!\!\int_{0}^{T}\!\!\!\!\!|h^\epsilon(s,x,y,z) \!-\! h(s,x,y,z))|^{2}ds\\
&\qquad \qquad \qquad \qquad  + \int_{0}^{T}\!\!\!\!|f^\epsilon(s,x,y,z) \!- \!f(s,x,y,z)|^{2}ds =\! 0.
\end{align*}
Then we have
\begin{equation}\label{stability FBDSDE}
\lim_{\epsilon \longrightarrow 0}  E\Big[ \Sup_{0 \leq s \leq T} |X_{s}^{\epsilon} - X_{s}|^{2} + \Sup_{0 \leq s \leq T}  |Y_{s}^{\epsilon} - Y_{s}|^{2} + \int_{0}^{T} |Z_{s}^{\epsilon} - Z_{s}|^{2}ds \Big]=0.
\end{equation}
\end{Lemma}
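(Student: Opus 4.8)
The plan is to lean on three classical tools: the It\^o formula for BDSDEs (Pardoux--Peng \cite{pard:peng:94}), the Burkholder--Davis--Gundy (BDG) inequality --- valid for the forward It\^o integral against $W$ and, via time reversal, also for the backward It\^o integral against $B$ --- and Gronwall's lemma. The structural input that makes everything close is $\alpha<1$ in \textbf{(H2)(ii)}, which is precisely what allows absorbing the quadratic $Z$-term generated by $h$.

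\emph{Part (i).} Apply It\^o's formula to $|Y_s|^2$ on $[t,T]$. Since the backward integral $\int h\,\overleftarrow{dB}$ contributes \emph{positively} to the bracket of $Y$, taking expectations (both stochastic integrals being true martingales with zero mean by (\ref{apriori1})) gives
\[
E|Y_t|^2+E\!\int_t^T|Z_s|^2ds=E|\Phi(X_T)|^2+2E\!\int_t^T Y_s\,f(s,\Sigma_s)\,ds+E\!\int_t^T|h(s,\Sigma_s)|^2ds,
\]
with $\Sigma_s=(X_s,Y_s,Z_s)$. Bound $|f(s,\Sigma_s)|\le|f(s,X_s,0,0)|+K(|Y_s|+|Z_s|)$ by \textbf{(H2)(i)}, and, using \textbf{(H2)(ii)} with reference point $(s,X_s,0,0)$ together with subadditivity of $\sqrt{\cdot}$, $|h(s,\Sigma_s)|\le|h(s,X_s,0,0)|+\sqrt{K}|Y_s|+\alpha|Z_s|$, hence $|h(s,\Sigma_s)|^2\le(1+\delta)\alpha^2|Z_s|^2+C_\delta(|h(s,X_s,0,0)|^2+K|Y_s|^2)$ for any $\delta>0$. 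Because $\alpha^2<1$, choosing $\delta$ and a Young parameter $\eta$ so small that $(1+\delta)\alpha^2+\eta<1$ moves the $Z$-terms to the left, and Gronwall yields the $p=2$ form of (\ref{Yts Estimation}) without the supremum. The upgrade to $p\ge2$ and to the running supremum is the standard BSDE bootstrap: re-run It\^o for $|Y_s|^p$ (or raise the above identity to the power $p/2$), estimate $\int Y_sZ_s\,dW_s$ and $\int Y_sh(s,\Sigma_s)\,\overleftarrow{dB_s}$ by BDG with the $ab\le\varepsilon a^2+\varepsilon^{-1}b^2$ device to reabsorb $E[\sup|Y|^p]$ and $E[(\int|Z|^2)^{p/2}]$, and close by Gronwall, the constant depending only on $T,K,\alpha,p$. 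For (\ref{Ys-Yt Estimation}), write $Y_s-Y_t=-\int_t^s f(r,\Sigma_r)\,dr-\int_t^s h(r,\Sigma_r)\,\overleftarrow{dB_r}+\int_t^s Z_r\,dW_r$: the Lebesgue term is handled by H\"older (gaining $|s-t|^{p-1}$ on the $\Phi$-, $f$- and $h$-at-zero and $Y$-contributions, and, via Cauchy--Schwarz in time, $|s-t|^{p/2}\le T^{p/2-1}|s-t|$ on the $Z$-contribution, absorbed into the constant), the $W$-integral by BDG giving exactly $E[(\int_t^s|Z_u|^2du)^{p/2}]$, and the $B$-integral by BDG giving $|s-t|^{p/2-1}E\int_t^s|h(r,\Sigma_r)|^p dr$, split as above; inserting (\ref{Yts Estimation}) finishes.

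\emph{Part (ii).} Set $\delta X=X^\epsilon-X$, $\delta Y=Y^\epsilon-Y$, $\delta Z=Z^\epsilon-Z$. For $\delta X$ this is classical SDE stability: It\^o on $|\delta X_s|^2$, the Lipschitz bounds on $b^\epsilon,\sigma^\epsilon$, Gronwall, and $E\int_0^T(|b^\epsilon-b|^2+|\sigma^\epsilon-\sigma|^2)(X_r)dr\to0$ by dominated convergence (pointwise convergence plus the moment bound (\ref{integrability})) give $E[\sup_s|\delta X_s|^2]\to0$; moreover, by part (i) applied to the perturbed equation together with $x^\epsilon\to x$, $\Phi^\epsilon(0)\to\Phi(0)$ and \textbf{(H2)(iv)} for $f^\epsilon,h^\epsilon$, the family $(X^\epsilon,Y^\epsilon,Z^\epsilon)$ is bounded in $L^2$ uniformly in $\epsilon$. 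The pair $(\delta Y,\delta Z)$ solves a BDSDE with terminal value $\Phi^\epsilon(X^\epsilon_T)-\Phi(X_T)$ and drivers $f^\epsilon(r,\Sigma^\epsilon_r)-f(r,\Sigma_r)$, $h^\epsilon(r,\Sigma^\epsilon_r)-h(r,\Sigma_r)$ with $\Sigma^\epsilon_r=(X^\epsilon_r,Y^\epsilon_r,Z^\epsilon_r)$; split each difference as a Lipschitz part $f(r,\Sigma^\epsilon_r)-f(r,\Sigma_r)$ ($\le K(|\delta X_r|+|\delta Y_r|+|\delta Z_r|)$, and the squared version with the $\alpha^2|\delta Z_r|^2$ bound for $h$, and $K^2|\delta X_T|^2$ for $\Phi$) plus a perturbation part $f^\epsilon(r,\Sigma^\epsilon_r)-f(r,\Sigma^\epsilon_r)$. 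Then It\^o on $|\delta Y_s|^2$, the very same $\alpha<1$ absorption as in part (i) and Gronwall reduce everything to
\[
E|\Phi^\epsilon(X^\epsilon_T)-\Phi(X^\epsilon_T)|^2+E\!\int_0^T\!|f^\epsilon(r,\Sigma^\epsilon_r)-f(r,\Sigma^\epsilon_r)|^2dr+E\!\int_0^T\!|h^\epsilon(r,\Sigma^\epsilon_r)-h(r,\Sigma^\epsilon_r)|^2dr\longrightarrow0.
\]
Since $\Phi^\epsilon-\Phi$, $f^\epsilon-f$, $h^\epsilon-h$ are equi-Lipschitz in $(x,y,z)$ and converge to $0$ pointwise, they converge to $0$ locally uniformly; truncating $\sup_r|\Sigma^\epsilon_r|$ at a level $R$ --- with the tail mass controlled uniformly in $\epsilon$ by Markov's inequality and the uniform $L^2$ bound --- yields the display, and Gronwall closes part (ii).

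\emph{Main obstacle.} Nothing here is deep, but two points require genuine care. First, in \emph{both} parts, threading the condition $\alpha<1$ through every $Z$-absorption so the constants remain finite: one must bound $|h(\cdot,\Sigma)|^2$ by $(1+\delta)\alpha^2|Z|^2+\text{l.o.t.}$ (via subadditivity of $\sqrt{\cdot}$), \emph{not} by $2\alpha^2|Z|^2$, which would fail as soon as $\alpha>1/\sqrt2$. Second, in part (ii), upgrading the \emph{pointwise} convergence of the coefficients to convergence \emph{evaluated along the random processes} $\Sigma^\epsilon$; this is exactly where the equi-Lipschitz/locally-uniform argument together with the uniform-in-$\epsilon$ $L^2$ bounds supplied by part (i) are indispensable, and hence why part (i) --- with constants depending only on $T,K,\alpha,p$ --- must be established before part (ii).
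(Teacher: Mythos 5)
The paper gives no proof of this lemma at all (it is explicitly omitted as ``based on technics which are classical in BSDEs' theory''), and your proposal is exactly the classical route the authors have in mind: the Pardoux--Peng It\^o formula for $|Y|^2$ (with the backward integral contributing $+\int|h|^2$ to the bracket), BDG for both the forward and the backward integrals, Young's inequality with the sharp $(1+\delta)\alpha^2$ absorption made possible by $\alpha<1$, Gronwall, and, for the stability part, equi-Lipschitz coefficients plus uniform moment bounds and a truncation argument. Part (i)'s first estimate \eqref{Yts Estimation} and part (ii) are argued correctly in this spirit.

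There is, however, a genuine problem in your derivation of \eqref{Ys-Yt Estimation} for $p>2$. After BDG you pass to $|s-t|^{p/2-1}E\int_t^s|h(r,\Sigma_r)|^p\,dr$ and then ``split as above''; but the resulting term $|s-t|^{p/2-1}E\int_t^s|Z_r|^p\,dr$ cannot be folded into $E\big[\big(\int_t^s|Z_u|^2du\big)^{p/2}\big]$ --- H\"older gives $\big(\int_t^s|Z_u|^2du\big)^{p/2}\le |s-t|^{p/2-1}\int_t^s|Z_u|^pdu$, i.e.\ the inequality you need goes the wrong way, and at this stage there is no pointwise-in-time $L^p$ control of $Z$ (that only comes in Lemma \ref{lemma Z bounded Y regularity}, which itself relies on the present lemma). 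Moreover even the $h(\cdot,X,0,0)$- and $Y$-contributions of the backward integral only produce a factor $|s-t|^{p/2}$, not the claimed $|s-t|^{p-1}$; and in fact no argument can produce $|s-t|^{p-1}$ for $p>2$: taking $f=\Phi=0$, $h\equiv1$ gives $Y_s=B_T-B_s$, $Z\equiv0$, so $E|Y_s-Y_t|^p\asymp|s-t|^{p/2}\gg|s-t|^{p-1}$ for small $|s-t|$. The estimate that your method (correctly implemented, i.e.\ keeping $E\big[\big(\int_t^s|h(r,\Sigma_r)|^2dr\big)^{p/2}\big]$ and splitting inside, which also absorbs the $\alpha^p\big(\int|Z|^2\big)^{p/2}$ part cleanly) actually yields carries $|s-t|^{p/2}$, which is the version that is true and is all that is used later, since the paper only invokes \eqref{Ys-Yt Estimation} at $p=2$, where $p/2=p-1$. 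So you should not assert that ``inserting \eqref{Yts Estimation} finishes'' the stated bound; state and prove the $|s-t|^{p/2}$ form instead. A second, minor point: in part (ii) the tail of the truncation requires uniform integrability of $|\Sigma^\epsilon_r|^2$, which Markov plus a uniform $L^2$ bound does not give; you need the $p>2$ moments from part (i) (e.g.\ Cauchy--Schwarz with $p=4$) to control $E\big[\mathbf{1}_{\{|\Sigma^\epsilon_r|>R\}}|\Sigma^\epsilon_r|^2\big]$ uniformly in $\epsilon$.
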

The next lemma provides $L^p$ estimates for the martingale component $Z$ of the solution of the F-BDSDE \ref{forward}-\ref{1}, for $p\geq 2$. It gives also estimates for the continuous component $Y$.
\begin{Lemma}\label{lemma Z bounded Y regularity}
 Assume that assumptions {\bf (H1)} and {\bf(H2)} hold. Then for all $p \geq 2$, there exists a constant $C_p > 0$ depending only on $T,K, \alpha$ and $p$ such that
\begin{eqnarray}\label{Zs bounded}
\Big(E[|Z_s^{t,x}|^p]\Big)^{\frac{1}{p}} \leq C_p(1+|x|) \textrm{ }a.e.\textrm{ }s \in [t,T].
\end{eqnarray}
In addition, there exists a positive constant $C$ independent from $\hat{h}$ the time step of a given uniform time-grid $\pi:=\{0=t_0<\ldots<t_N=T\}$ such that
\begin{align}\label{Y regularity}
\max_{0\leq n \leq N-1} \sup_{t_n \leq s \leq t_{n+1}}E\Big[|Y_{s}^{t,x}-Y_{t_n}^{t,x}|^{2}] + |Y_{s}^{t,x}-Y_{t_{n+1}}^{t,x}|^{2}\Big]  \leq C \hat{h} (1+|x|^2).
\end{align}
\end{Lemma}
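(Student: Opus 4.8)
\textbf{Proof plan for Lemma \ref{lemma Z bounded Y regularity}.}

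The plan is to derive \eqref{Zs bounded} from the representation formula \eqref{Representation of Z}, $Z_s^{t,x}=\nabla Y^{t,x}_s[\nabla X_s^{t,x}]^{-1}\sigma(X_s^{t,x})$, obtained in the previous corollary. By H\"older's inequality it suffices to control, for $p\geq 2$, the $3p$-th moments of $\nabla Y_s^{t,x}$, $[\nabla X_s^{t,x}]^{-1}$ and $\sigma(X_s^{t,x})$ separately. The moments of $\nabla X^{t,x}$ and of its inverse are classical for SDEs with $C_b^1$ coefficients (uniform in $s$ and polynomial in $|x|$; in fact bounded since $\nabla b,\nabla\sigma$ are bounded), and $|\sigma(X_s^{t,x})|\leq |\sigma(0)|+K|X_s^{t,x}|$ gives the $3p$-th moment from \eqref{integrability}-type bounds. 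For $\nabla Y^{t,x}$ one applies the $L^p$ a priori estimate \eqref{Yts Estimation} of Lemma \ref{Lemma stability non markovian } to the linear F-BDSDE \eqref{Grad Y} satisfied by $(\nabla Y^{t,x},\nabla Z^{t,x})$: its terminal condition is $\nabla\Phi(X_T^{t,x})\nabla X_T^{t,x}$ and its driver inhomogeneities are $\nabla_x f(r,\Sigma_r)\nabla X_r$ and $\nabla_x h(r,\Sigma_r)\nabla X_r$, all bounded by $C|\nabla X^{t,x}|$ since $\nabla\Phi,\nabla_x f,\nabla_x h$ are bounded; the $y$- and $z$-dependent terms have bounded coefficients and, for $h$, the $\alpha<1$ contraction hypothesis \textbf{(H2)(ii)} is exactly what makes \eqref{Yts Estimation} applicable. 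This yields $E[|\nabla Y_s^{t,x}|^{3p}]\leq C_p(1+|x|^{3p})$ uniformly in $s$, and combining the three factors through H\"older gives \eqref{Zs bounded}. One must be slightly careful that Lemma \ref{Lemma stability non markovian } is stated for the F-BDSDE \eqref{forward}-\eqref{1} with globally Lipschitz coefficients; the linear equation \eqref{Grad Y} has random coefficients, so I would either invoke the analogous $L^p$ estimate for linear BDSDEs with bounded (random) coefficients — standard via It\^o's formula on $|\nabla Y|^p$, Burkholder-Davis-Gundy, and absorbing the backward-$B$ martingale term using $\alpha<1$ — or note that the same proof technique as for Lemma \ref{Lemma stability non markovian } applies verbatim.

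For the regularity estimate \eqref{Y regularity}, the plan is to apply \eqref{Ys-Yt Estimation} with $p=2$ on the sub-interval $[t_n,t_{n+1}]$: for $t_n\leq s\leq t_{n+1}$,
\begin{align*}
E\big[|Y_s^{t,x}-Y_{t_n}^{t,x}|^2\big]\leq C\Big\{\big(1+|x|^2\big)|s-t_n| + E\Big[\int_{t_n}^{s}|Z_u^{t,x}|^2\,du\Big]\Big\},
\end{align*}
where the first bracketed term used $E[|\Phi(X_T)|^2]+E[\sup_s|f(s,X_s,0,0)|^2]+E[\sup_s|h(s,X_s,0,0)|^2]\leq C(1+|x|^2)$, which follows from the linear growth of $\Phi,f(\cdot,\cdot,0,0),h(\cdot,\cdot,0,0)$ in $x$ (a consequence of \textbf{(H2)(i),(ii),(iii),(iv)}) together with \eqref{integrability}. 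Since $|s-t_n|\leq \hat h$, the first term is bounded by $C\hat h(1+|x|^2)$. For the second term I would use \eqref{Zs bounded}: $E[\int_{t_n}^{s}|Z_u^{t,x}|^2\,du]\leq \int_{t_n}^{t_{n+1}} C_2^2(1+|x|)^2\,du\leq C\hat h(1+|x|^2)$. The bound for $|Y_s^{t,x}-Y_{t_{n+1}}^{t,x}|^2$ is identical with $[s,t_{n+1}]$ in place of $[t_n,s]$. Taking the supremum over $s\in[t_n,t_{n+1}]$ and then the maximum over $n$ preserves the $C\hat h(1+|x|^2)$ bound since the constant does not depend on $n$ or on the grid, giving \eqref{Y regularity}.

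The main obstacle I expect is the $L^p$ control of $\nabla Y^{t,x}$ with the correct polynomial dependence on $|x|$: one must handle the backward It\^o integral driven by $B$ carefully, since naive BDG on $\int h\,\overleftarrow{dB}$ produces a term in $\nabla Z$ that has to be absorbed, and this absorption is precisely where the structural assumption $\alpha<1$ on the $z$-Lipschitz constant of $h$ (and the fact that $\nabla_z h$ inherits a bound consistent with $\alpha$) is used — this is the same mechanism that makes the a priori estimates in Proposition \ref{aprioriest} and Lemma \ref{Lemma stability non markovian } work, so I would reuse that machinery rather than redo it. Everything else is routine: H\"older bookkeeping for the product in \eqref{Representation of Z}, classical moment bounds for $\nabla X$ and $[\nabla X]^{-1}$, and a direct substitution into \eqref{Ys-Yt Estimation} for \eqref{Y regularity}.
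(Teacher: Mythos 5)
Your argument for the smooth case is essentially the paper's: representation \eqref{Representation of Z}, H\"older with $3p$-th moments for $\nabla Y^{t,x}$, $[\nabla X^{t,x}]^{-1}$ and $\sigma(X^{t,x}_s)$, the $L^p$ estimate \eqref{Yts Estimation} applied to the linear BDSDE \eqref{Grad Y} (with the $\alpha<1$ absorption), and then \eqref{Ys-Yt Estimation} plus \eqref{Zs bounded} for \eqref{Y regularity}. However, there is a genuine gap: the lemma is stated under \textbf{(H1)}--\textbf{(H2)} only, while the representation result \eqref{Representation of Z} you invoke is only available when $b,\sigma,\Phi\in C_b^{1}$ and $f,h\in C_b^{0,1}$ (it rests on the Malliavin-differentiability results of Propositions \ref{propderiv1} and \ref{DY version of Z}). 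Your proposal never bridges from the differentiable case to the merely Lipschitz case, so as written it proves the lemma only under the stronger hypotheses. The paper closes this by a mollification argument: it first establishes \eqref{Zs bounded} for smooth coefficients with a constant $C_p$ independent of the mollification parameter, then replaces $b,\sigma,\Phi,f,h$ by smooth mollifiers $b^k,\sigma^k,\Phi^k,f^k,h^k$, uses the stability result \eqref{stability FBDSDE} to get $Z^{k,t,x}\to Z^{t,x}$ in $L^2(dP\otimes ds)$, extracts for a.e.\ $s$ a subsequence converging in probability, and concludes by Fatou's lemma; only then is the bound inserted into \eqref{Ys-Yt Estimation}. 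This limiting step is the essential idea your plan is missing, and it is also why \eqref{Zs bounded} is stated only for a.e.\ $s$.

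A secondary inaccuracy: you claim $E[|\nabla Y_s^{t,x}|^{3p}]\leq C_p(1+|x|^{3p})$ and still announce \eqref{Zs bounded} with a single factor $(1+|x|)$; combined with $\sigma(X_s^{t,x})$, which already contributes $(1+|x|)$ through \eqref{integrability}, your bookkeeping would yield $(1+|x|)^2$, hence $C\hat h(1+|x|^4)$ in \eqref{Y regularity}. In fact, by your own observation that the terminal condition $\nabla\Phi(X_T)\nabla X_T$ and the driver inhomogeneities are bounded by $C|\nabla X|$, whose moments are bounded uniformly in $x$, the correct conclusion is $E\big[\sup_{s}|\nabla Y_s^{t,x}|^{3p}\big]\leq C_p$ with no $x$-dependence (as in the paper), the whole $(1+|x|)$ factor coming from $\sigma(X_s^{t,x})$ alone.
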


\begin{proof}
Fisrt, we consider the case when $b,\sigma$ and $\Phi$ are in $C_b^{1}$ and $f$ and $h$  are in $C_b^{0,1}$ and satisfying assumptions {\bf (H1)} and {\bf(H2)}.
Let $(\nabla X^{t,x}, \nabla Y^{t,x}, \nabla Z^{t,x})$ be the solution of the F-BDSDE \eqref{Grad X}-\eqref{Grad Y}.\\
Since $\nabla X^{t,x}$ is the solution of the SDE $(\ref{Grad X})$, $[\nabla X^{t,x}]^{-1}$ is also the solution of an SDE and we have the following estimate
\begin{eqnarray}
E\Big[\Sup_{0 \leq t \leq T}|[\nabla X_{s}^{t,x}]^{-1}|^p\Big] \leq C_p.
\end{eqnarray}
On the other hand, $\nabla Y^{t,x}$ is the solution of the linear BDSDE $(\ref{Grad Y})$. Using estimate $(\ref{Yts Estimation})$, we get
\begin{eqnarray}
E\Big[\Sup_{0 \leq t \leq T}|\nabla Y_{s}^{t,x}|^p\Big] \leq C_p.
\end{eqnarray}
Now, recall the representation result \eqref{Representation of Z}
\begin{equation*}
Z_{s}^{t,x} = \nabla Y_{s}^{t,x} [\nabla X_{s}^{t,x}]^{-1}  \sigma(X_{s}^{t,x}), P-a.s. \textrm{, for all }  s \in [t,T].
\end{equation*}
Using H\"{o}lder's inequality, we get
\begin{eqnarray}\label{Estimation Z regular case}
(E[|Z_{s}^{t,x}|^{p}] )^{\frac{1}{p}}&\leq& (E[|\nabla Y_{s}^{t,x}|^{3p}] )^{\frac{1}{3p}}  (E[| [\nabla X_{s}^{t,x}]^{-1}|^{3p}] )^{\frac{1}{3p}} (E[|\sigma(X_{s}^{t,x})|^{3p}] )^{\frac{1}{3p}}\nonumber\\
&\leq& C_p (1+|x|),\textrm{ } \forall s \in [t,T].
\end{eqnarray}
Now the aim is to generalize the previous estimate to the globally Lipschitz continuous coefficients' case. So let $b,\sigma,\Phi,f$ and $h$ be coefficients satisfying the assumptions $(\bf{H1})$ and $(\bf{H2})$ and let $b^k,\sigma^k,\Phi^k,f^k$ and $h^k$ be smooth molifiers of these coefficients (take $b^k,\sigma^k,\Phi^k$ in $C^1_b$ and $f^k$ and $h^k$ in $C^{0,1}_b$). Denoting $Z^{t,x,k}$ the solution of the F-BDSDE associated to the smooth coefficients, we deduce from $(\ref{Estimation Z regular case})$ that $(E[|Z_{s}^{k,t,x}|^{p}] )^{\frac{1}{p}} \leq C_p (1+|x|)$, $\forall s \in [t,T]$, where $C_p$ is independent from $k$. Using the stability result $(\ref{stability FBDSDE})$ , we get
\begin{eqnarray}
\lim_{k \longrightarrow + \infty } E\Big[\int_{t}^{T}|Z_{s}^{k,t,x}-Z_{s}^{t,x}|^2 ds \Big] =0.
\end{eqnarray}
We deduce that for a.e. $s \in [t,T]$, there exist a subsequence of $(Z^{k,t,x})_k$ such that $\displaystyle{\lim_{k \longrightarrow + \infty } Z_{s}^{k,t,x}=Z_{s}^{t,x}}$ in probability. By the Fatou's Lemma, we get $(E[|Z_{s}^{t,x}|^{p}] )^{\frac{1}{p}} \leq C_p (1+|x|)$. Inserting the latter inequality in  $(\ref{Ys-Yt Estimation})$, we get the estimate $(\ref{Y regularity})$.
\end{proof}
\ep\\
Now we are in position to prove our main result which is the $L^2$-regularity of the solutions of F-BDSDEs.
Fisrt, we need to define the step process $\bar{Z}$.\\ Let 
$\displaystyle{\pi:=\{0=t_0<\ldots<t_N=T\}}$ be a uniform time-grid with time step $\hat{h}$. We define $\bar{Z}$ by
\begin{eqnarray}\label{barZdef}
\left\{ \begin{array}{lll}
\bar{Z}_{t}=\frac{1}{\hat{h}}\displaystyle{E_{t_{n}}\Big[\int_{t_{n}}^{t_{n+1}} Z_{s}ds\Big]}, \textrm{ for all } t \in [t_n,t_{n+1}), \textrm{ for all } n \in \{0,\ldots,N-1\},\\
\bar{Z}_{t_{N}}=0.
\end{array} \right.
\end{eqnarray}
\begin{Theorem}[$L^2$-regularity]\label{Theorem L2 regularity}
Under assumptions $(\bf{H1})$ and $(\bf{H2})$, we have
\begin{align}\label{L2 regularity estimate}
\underset{0 \leq n \leq N-1}{\max} & \Sup_{t_{n} \leq s \leq _{n+1}}   E\Big[|Y_{s}-Y_{t_{n}}|^{2} + |Y_{s}-Y_{t_{n+1}}|^{2}\Big] \nonumber\\
&+ \sum_{n=0}^{N-1} \int_{t_{n}}^{t_{n+1}}E\big[ |Z_{s}-\bar{Z}_{t_{n}}|^{2} +  |Z_{s}-\bar{Z}_{t_{n+1}}|^{2} \big]ds
 \leq  C \hat{h} (1+|x|^2) .
\end{align}
\end{Theorem}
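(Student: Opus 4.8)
The plan is to reduce the $L^2$-regularity estimate \eqref{L2 regularity estimate} to the two already-established ingredients: the path regularity of $Y$ from \eqref{Y regularity} and the $L^p$-boundedness of $Z$ from \eqref{Zs bounded}. The $Y$-part of the estimate is literally \eqref{Y regularity}, so nothing new is needed there; the whole work is in controlling $\sum_{n}\int_{t_n}^{t_{n+1}}E[|Z_s-\bar Z_{t_n}|^2 + |Z_s-\bar Z_{t_{n+1}}|^2]\,ds$. For the term involving $\bar Z_{t_n}$ I would use the fact that $\bar Z_{t_n}$ is, by \eqref{barZdef}, the $L^2(\Omega,\Fc_{t_n})$-projection of the average $\frac1{\hat h}\int_{t_n}^{t_{n+1}}Z_s\,ds$, hence also (by a standard BSDE-type argument, e.g. the one in Zhang \cite{zhan:04}) the minimizer over $\Fc_{t_n}$-measurable random variables $\zeta$ of $E[\int_{t_n}^{t_{n+1}}|Z_s-\zeta|^2\,ds]$. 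Consequently it suffices to bound $\sum_n \int_{t_n}^{t_{n+1}} E[|Z_s - \widetilde Z_n|^2]\,ds$ for any convenient choice of $\Fc_{t_n}$-measurable $\widetilde Z_n$.

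The natural choice, following Zhang, is $\widetilde Z_n = E_{t_n}[Z_{t_n}]$ — or more robustly, since in the Lipschitz setting $Z_{t_n}$ need not be well defined pointwise, one works with the smooth mollified coefficients first (where the representation \eqref{Representation of Z} gives $Z$ continuous paths) and passes to the limit at the end using the stability result \eqref{stability FBDSDE}, exactly as in the proof of Lemma \ref{lemma Z bounded Y regularity}. In the smooth case, I would write $Z_s - Z_{t_n} = (D_s Y_s - D_{t_n}Y_{t_n})$ via Proposition \ref{DY version of Z}, and split this as $(D_s Y_s - D_{t_n}Y_s) + (D_{t_n}Y_s - D_{t_n}Y_{t_n})$. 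The first difference is controlled by the path regularity of the Malliavin derivative: using \eqref{DY formula}, $D_sY_s - D_{t_n}Y_s = \nabla Y_s([\nabla X_s]^{-1}\sigma(X_s) - [\nabla X_{t_n}]^{-1}\sigma(X_{t_n}))$ combined with the SDE satisfied by the processes $\nabla X$, $[\nabla X]^{-1}$, $X$, whose increments over $[t_n,s]$ are $O(\sqrt{\hat h})$ in $L^p$; together with the $L^p$-bound on $\nabla Y$ and Hölder this gives an $L^2$-contribution of order $\hat h(1+|x|^2)$ after summing over $n$ and integrating. The second difference, $D_{t_n}Y_s - D_{t_n}Y_{t_n}$ for fixed $\theta=t_n$, is handled by applying the path-regularity estimate \eqref{Ys-Yt Estimation} from Lemma \ref{Lemma stability non markovian } to the linear BDSDE \eqref{DY} solved by $D_{t_n}Y$ (whose coefficients are the bounded derivatives of $f,h,\Phi$ acting on $D_{t_n}X$), which yields $E[|D_{t_n}Y_s - D_{t_n}Y_{t_n}|^2] \le C(\hat h + E[\int_{t_n}^{s}|D_{t_n}Z_u|^2\,du])$; summing the $\int_{t_n}^{t_{n+1}}$ of this over $n$ produces $C\hat h(1+|x|^2)$ plus $C\hat h\,E[\int_t^T |D_{t_n}Z_u|^2\,du]$, where the last term is uniformly bounded thanks to Proposition \ref{propderiv1} and estimate \eqref{aprioriDX}.

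For the $\bar Z_{t_{n+1}}$ term one argues symmetrically but with care, since $\bar Z_{t_{n+1}}$ is $\Fc_{t_{n+1}}$-measurable while the integration is over $[t_n,t_{n+1}]$; here I would bound $E[|Z_s - \bar Z_{t_{n+1}}|^2]$ by first comparing $Z_s$ with $Z_{t_{n+1}}$ (path regularity as above, giving $O(\hat h)$ after summation) and then comparing $Z_{t_{n+1}}$ with $\bar Z_{t_{n+1}}$, which is again controlled by the $\bar Z_{t_{n+2}}$-average... — more simply, one notes $\bar Z_{t_{n+1}} = \frac1{\hat h}E_{t_{n+1}}[\int_{t_{n+1}}^{t_{n+2}}Z_u\,du]$ and uses Jensen together with path regularity of $Z$ on $[t_{n+1},t_{n+2}]$ to get $E[|Z_{t_{n+1}} - \bar Z_{t_{n+1}}|^2] \le \frac1{\hat h}\int_{t_{n+1}}^{t_{n+2}}E[|Z_{t_{n+1}} - Z_u|^2]\,du = O(\hat h)$; multiplying by $\hat h$ (the length of $[t_n,t_{n+1}]$) and summing gives a term of order $\hat h$ as well. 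Finally I would remove the smoothness assumption: all constants above depend only on $T,K,\alpha$ (and the Lipschitz/mollification bounds, which are preserved under mollification), so the estimate holds for the mollified coefficients uniformly, and Lemma \ref{Lemma stability non markovian }(ii) together with Fatou's lemma transfers it to the general Lipschitz case.

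The main obstacle I anticipate is the bookkeeping in the second step — establishing that the increments of the Malliavin derivative process $D_{t_n}Y$ over a time interval of length $\hat h$ are of order $\sqrt{\hat h}$ in $L^2$ \emph{uniformly in $n$ and with the right dependence on $|x|$}, and in particular handling the $\hat h\,E[\int_t^T|D_{t_n}Z_u|^2\,du]$ term so that the factor $\hat h$ is not lost. This requires applying the linear-BDSDE version of Lemma \ref{Lemma stability non markovian } to \eqref{DY} with the correct tracking of the forcing terms (involving $D_{t_n}X$, which itself has $L^p$-norm $O(1+|x|)$ by \eqref{aprioriDX}), and then summing a bound that is itself of the form ``$\hat h$ times something integrable'' — a structure which is exactly what makes Zhang's argument work in the classical case and which I expect to carry over, but whose details are the crux.
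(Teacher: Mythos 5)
Your overall route coincides with the paper's: reduce the $Y$-part to \eqref{Y regularity}, mollify the coefficients, use that $\bar{Z}_{t_n}$ minimizes the conditional mean-square error so that an $\Fc_{t_n}$-measurable competitor built from the mollified solution can be inserted, control $Z^k_s-Z^k_{t_n}$ through the representation \eqref{Representation of Z} together with path regularity of the linearized BDSDE, treat the $\bar{Z}_{t_{n+1}}$-term by Jensen on the following interval, and transfer the bound to the Lipschitz case via the stability result \eqref{stability FBDSDE}. Your splitting $Z_s-Z_{t_n}=(D_sY_s-D_{t_n}Y_s)+(D_{t_n}Y_s-D_{t_n}Y_{t_n})$ is only a regrouping of the paper's three-term decomposition of $\nabla Y_s[\nabla X_s]^{-1}\sigma(X_s)-\nabla Y_{t_n}[\nabla X_{t_n}]^{-1}\sigma(X_{t_n})$.

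However, the step you yourself identify as the crux does not close as written. After summation the remainder is $\hat{h}\sum_{n=0}^{N-1}E\big[\int_{t_n}^{t_{n+1}}|D_{t_n}Z_u|^2du\big]$, in which the differentiation time $t_n$ moves with the subinterval; it is not ``$\hat{h}\,E[\int_t^T|D_{t_n}Z_u|^2du]$'' for a single $n$. Proposition \ref{propderiv1} gives, for each \emph{fixed} $\theta=t_n$, only $E[\int_{t_n}^T|D_{t_n}Z_u|^2du]\leq C(1+|x|^2)$; summing $N$ such bounds loses the factor $\hat{h}$ and yields $O(1)$ rather than $O(\hat{h})$. To retain $\hat h$ you would need $\sum_n E[\int_{t_n}^{t_{n+1}}|D_{t_n}Z_u|^2du]\leq C(1+|x|^2)$ uniformly in $N$, i.e.\ essentially a bound on $E[|D_\theta Z_u|^2]$ uniform in $\theta$ and $u$ --- the analogue of \eqref{Zs bounded} for the linearized equation, which none of the results you invoke supplies under $C^1_b$ coefficients. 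The paper avoids isolating this term: it applies H\"older against the $L^p$-bounded factors $[\nabla X^k_s]^{-1}$, $\sigma^k(X^k_s)$, $\nabla Y^k_{t_n}$ and uses increment estimates of order $\hat h$, uniform in $n$ and $s$, for $\nabla Y^k$ and $[\nabla X^k]^{-1}$ (the analogues of \eqref{Y regularity} and \eqref{Ys-Yt Estimation} for the linear system \eqref{Grad X}--\eqref{Grad Y}); you would have to reproduce that step, or an equivalent one, to make your second difference work. Two smaller omissions: for $n=N-1$ your Jensen argument refers to the nonexistent interval $[t_N,t_{N+1}]$ --- the paper isolates $\int_{t_{N-1}}^{t_N}E[|Z_s|^2]ds$ and bounds it by $C\hat{h}(1+|x|^2)$ via \eqref{Zs bounded}, which you list as an ingredient but never actually deploy; and in comparing with the mollified objects one also needs the term $E[|\bar{Z}^k_{t_{n+1}}-\bar{Z}_{t_{n+1}}|^2]$, which the paper controls by Jensen and \eqref{stability FBDSDE for Zk}.
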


\begin{proof}
Using the estimate \eqref{Y regularity}, one obtains 
\begin{align}\label{Upper bound Appendix}
&\underset{0 \leq n \leq N-1}{\max}  \Sup_{t_{n} \leq s \leq _{n+1}}   E\Big[|Y_{s}-Y_{t_{n}}|^{2} + |Y_{s}-Y_{t_{n+1}}|^{2}\Big] \nonumber\\
&+ \sum_{n=0}^{N-1} \int_{t_{n}}^{t_{n+1}}E\big[ |Z_{s}-\bar{Z}_{t_{n}}|^{2} +  |Z_{s}-\bar{Z}_{t_{n+1}}|^{2} \big]ds
 \nonumber\\
 &\leq  C \hat{h} (1+|x|^2) + C \sum_{n=0}^{N-1} \int_{t_{n}}^{t_{n+1}}\!\!\!\!\!  E[|Z_{s}-\bar{Z}_{t_{n}}|^{2}]ds + C \sum_{n=0}^{N-1}\! \int_{t_{n}}^{t_{n+1}}\!\!\!\!\!E[|Z_{s}-\bar{Z}_{t_{n+1}}|^{2}]ds.
\end{align}
Let $b^k,\sigma^k,\Phi^k,f^k$ and $h^k$ be smooth molifiers of $b,\sigma,\Phi,f$ and $h$ (we take $b^k,\sigma^k$ and $\Phi^k$ in $C_b^{1}$ and $f^k$ and $h^k$ in $C_b^{0,1}$). We denote by $(X^k,Y^k,Z^{k})$ the solution of the F-BDSDE associated to the smooth coefficients.\\
First, we deal with the term $ \displaystyle{\sum_{n=0}^{N-1} \int_{t_{n}}^{t_{n+1}}E[|Z_{s}-\bar{Z}_{t_{n}}|^{2}]ds}$.
Since the conditional expectation minimizes the conditional mean square error, we have
\begin{eqnarray*}\label{}
&&\sum_{n=0}^{N-1} \int_{t_{n}}^{t_{n+1}}E|Z_{s}-\bar{Z}_{t_{n}}|^{2}ds  \leq   \sum_{n=0}^{N-1} \int_{t_n}^{t_{n+1}} E|Z_s -Z_{t_n}^{k}|^2ds\nonumber\\
&&\leq  2 \sum_{n=0}^{N-1} \int_{t_{n}}^{t_{n+1}}E|Z_{s}-Z_{s}^{k} |^{2}ds  + 2 \sum_{n=0}^{N-1} \int_{t_n}^{t_{n+1}} E|Z_{s}^{k} -Z_{t_n}^{k} |^2ds, \nonumber\\
&&=  2 \int_{0}^{T} | Z_{s} - Z_{s}^{k}  |^{2}ds  + 2 \sum_{n=0}^{N-1} \int_{t_n}^{t_{n+1}} E|Z_{s}^{k} -Z_{t_n}^{k} |^2ds. \nonumber\\
\end{eqnarray*}
By the stability result (\ref{stability FBDSDE}), we have
\begin{align}\label{stability FBDSDE for Zk}
\lim_{k \longrightarrow +\infty} \int_{0}^{T} |Z_{s}^{k} - Z_{s}|^{2}ds=0.
\end{align}
Now, using the representation result \eqref{Representation of Z} for $Z^k$, we have
\begin{eqnarray}\label{Zks-Zks' representation}
Z_{s}^{k} -Z_{s'}^{k} = \nabla Y_{s}^{k} [\nabla X_{s}^{k}]^{-1}  \sigma^{k}(X_{s}^{k})- \nabla Y_{s'}^{k} [\nabla X_{s'}^{k}]^{-1}  \sigma^{k}(X_{s'}^{k}), s, s' \in [t_n,t_{n+1}).
\end{eqnarray}
Then, by inserting $\nabla Y_{s'}^{k}  [\nabla X_{s}^{k}]^{-1} \sigma^{k}(X_{s}^{k})$ and $\nabla Y_{s'}^{k} [\nabla X_{s'}^{k}]^{-1} \sigma^{k}(X_{s}^{k})$, we obtain
\begin{eqnarray*}
|Z_{s}^{k} -Z_{s'}^{k}|^2 &\leq & 3|\nabla Y_{s}^{k} - \nabla Y_{s'}^{k}|^2 | [\nabla X_{s}^{k}]^{-1}|^2 |\sigma^{k}(X_{s}^{k})|^2\nonumber\\
&+&  3|\nabla Y_{s'}^{k} |^2 | [\nabla X_{s}^{k}]^{-1} -  [\nabla X_{s'}^{k}]^{-1}|^2 |\sigma^{k}(X_{s}^{k})|^2 \nonumber\\
&+&  3|\nabla Y_{s'}^{k} |^2 | [\nabla X_{s'}^{k}]^{-1}|^2 |\sigma^{k}(X_{s}^{k}) - \sigma^{k}(X_{s'}^{k})|^{2}.
\end{eqnarray*}
 For $s'=t_n$, we get
\begin{eqnarray*}
|Z_{s}^{k} -Z_{t_{n}}^{k}|^2 &\leq& C\Big\{|\nabla Y_{s}^{k} - \nabla Y_{t_{n}}^{k}|^2 | [\nabla X_{s}^{k}]^{-1}|^2 |\sigma^{k}(X_{s}^{k})|^2\nonumber\\
&+&  |\nabla Y_{t_{n}}^{k} |^2 | [\nabla X_{s}^{k}]^{-1} -  [\nabla X_{t_{n}}^{k}]^{-1}|^2  |\sigma^{k}(X_{s}^{k})|^2 \nonumber\\
&+&  |\nabla Y_{t_{n}}^{k} |^2 | [\nabla X_{t_{n}}^{k}]^{-1}|^2 |\sigma^{k}(X_{s}^{k}) - \sigma^{k}(X_{t_{n}}^{k})|^{2}\Big\}.
\end{eqnarray*}
We conclude by using H\"{o}lder's inequality and the estimate \eqref{Y regularity} that
\begin{equation}\label{Zktn estimate}
\Sum_{n=0}^{N-1} E\Big[ \int_{t_n}^{t_{n+1}} |Z_s^{k} -Z_{t_n}^{k}|^2ds \Big] \leq C \hat{h} (1+|x|^2),
\end{equation}
here we also used also the same kind of estimation as $(\ref{Ys-Yt Estimation})$ but for $[\nabla X^{k}]^{-1}$  (instead of $\nabla Y_{s}^{k}$) as it is a solution of an SDE.\\
Now, it reminds to handle the error term $$ \sum_{n=0}^{N-1}\! \int_{t_{n}}^{t_{n+1}} E[|Z_{s}-\bar{Z}_{t_{n+1}}|^{2}]ds=\sum_{n=0}^{N-2}\! \int_{t_{n}}^{t_{n+1}} E[|Z_{s}-\bar{Z}_{t_{n+1}}|^{2}]ds + \int_{t_{N-1}}^{t_{N}} E[|Z_{s}|^{2}]ds .$$
Define
\begin{eqnarray}\label{barZdef}
\left\{ \begin{array}{lll}
\bar{Z}^k_{t}=\frac{1}{\hat{h}}\displaystyle{E_{t_{n}}\Big[\int_{t_{n}}^{t_{n+1}} Z_{s}^k ds \Big]}, \textrm{ for all } t \in [t_n,t_{n+1}), \textrm{ for all } n \in \{0,\ldots,N-1\},\\
\bar{Z}^k_{t_{N}}=0.
\end{array} \right.
\end{eqnarray}
Then, inserting $Z_s^{k},Z^k_{t_{n+1}}$ and $\bar{Z}^k_{t_{n+1}}$, we get
\begin{eqnarray*}
&& \sum_{n=0}^{N-2}  \int_{t_{n}}^{t_{n+1}}  E[|Z_{s}-\bar{Z}_{t_{n+1}}|^{2}]ds  \leq  C \sum_{n=0}^{N-2}\! \int_{t_{n}}^{t_{n+1}} E[|Z_{s}- Z_{s}^k|^{2}]ds \nonumber\\ 
&&+ C\sum_{n=0}^{N-2}\! \int_{t_{n}}^{t_{n+1}} E[|Z_{s}^k- Z^k_{t_{n+1}}|^{2}]ds 
+ C\sum_{n=0}^{N-2}\! \int_{t_{n}}^{t_{n+1}} E[|Z^k_{t_{n+1}}-\bar{Z}^k_{t_{n+1}}|^{2}]ds
\nonumber\\
&&+ C\sum_{n=0}^{N-2}\! \int_{t_{n}}^{t_{n+1}} E[|\bar{Z}^k_{t_{n+1}}-\bar{Z}_{t_{n+1}}|^{2}]ds.
\end{eqnarray*}
Note that $$\sum_{n=0}^{N-2}\! \int_{t_{n}}^{t_{n+1}} E[|Z_{s}- Z_{s}^k|^{2}]ds \leq \sum_{n=0}^{N-1}\! \int_{t_{n}}^{t_{n+1}} E[|Z_{s}- Z_{s}^k|^{2}]ds= \int_{0}^{T} | Z_{s} - Z_{s}^{k} |^{2}ds,$$
which tends to zero when $k$ tends to infinity, again by the stability result \ref{stability FBDSDE for Zk}.\\
The term $\displaystyle{\sum_{n=0}^{N-2}\! \int_{t_{n}}^{t_{n+1}} E[|Z_{s}^k- Z^k_{t_{n+1}}|^{2}]ds}$ is bounded by $\displaystyle{\sum_{n=0}^{N-1}\! \int_{t_{n}}^{t_{n+1}} E[|Z_{s}^k- Z^k_{t_{n+1}}|^{2}]ds}$ which is handled exactly like $\displaystyle{ \sum_{n=0}^{N-1}\! \int_{t_{n}}^{t_{n+1}} E[|Z_{s}^k- Z^k_{t_{n}}|^{2}]ds }$ using the representation result \ref{Representation of Z} for $Z^k$ (take $s'=t_{n+1}$ in \eqref{Zks-Zks' representation}). We get
\begin{equation*}
\Sum_{n=0}^{N-1} E\Big[ \int_{t_n}^{t_{n+1}} |Z_s^{k} -Z_{t_{n+1}}^{k}|^2ds \Big] \leq C \hat{h} (1+|x|^2).
\end{equation*}
We deal with the term $\displaystyle{ \sum_{n=0}^{N-2}\! \int_{t_{n}}^{t_{n+1}} E[|Z^k_{t_{n+1}}-\bar{Z}^k_{t_{n+1}}|^{2}]ds} $ as follows.\\
By the definition of $\bar{Z}^k_{t_{n+1}}$, Jensen's inequality and Cauchy-Schwarz's inequality, we have for all $\displaystyle{n=0,\ldots,N-2}$
\begin{eqnarray*}
\int_{t_{n}}^{t_{n+1}} E[|Z^k_{t_{n+1}}-\bar{Z}^k_{t_{n+1}}|^{2}]ds &=&\hat{h} E[|Z_{t_{n+1}}^k-\bar{Z}^k_{t_{n+1}}|^{2}] \\
&=& \frac{1}{\hat{h}}E\Big[\Big|E_{t_{n+1}}\Big[ \int_{t_{n+1}}^{t_{n+2}} (Z^k_{t_{n+1}}-Z_{s}^k ) ds\Big]\Big|^{2}\Big]  \nonumber\\
&\leq &    \frac{1}{\hat{h}}E\Big[ \Big|  \int_{t_{n+1}}^{t_{n+2}} (Z^k_{t_{n+1}} - Z^k_{s} ) ds \Big|^{2}\Big]  \nonumber\\
&\leq &     \int_{t_{n+1}}^{t_{n+2}} E |Z^k_{t_{n+1}}-Z^k_{s}  |^{2} ds. \nonumber\\
\end{eqnarray*}
Thus,
\begin{eqnarray*}
\sum_{n=0}^{N-2} \int_{t_{n}}^{t_{n+1}} E[|Z^k_{t_{n+1}}-\bar{Z}^k_{t_{n+1}}|^{2}]ds & \leq &  \sum_{n=0}^{N-2} \int_{t_{n+1}}^{t_{n+2}} E |Z^k_{t_{n+1}}-Z^k_{s}  |^{2} ds \nonumber\\
 &=&  \sum_{n=1}^{N-1} \int_{t_{n}}^{t_{n+1}} E |Z^k_{t_{n}}-Z^k_{s}  |^{2} ds\nonumber\\
 &\leq &  \sum_{n=0}^{N-1} \int_{t_{n}}^{t_{n+1}} E |Z^k_{t_{n}}-Z^k_{s}  |^{2} ds \nonumber\\
 &\leq & C \hat{h} (1+|x|^2),
\end{eqnarray*}
by \eqref{Zktn estimate}.\\
Finally, we deal with $\displaystyle{\sum_{n=0}^{N-2} \! \int_{t_{n}}^{t_{n+1}} E[|\bar{Z}^k_{t_{n+1}}-\bar{Z}_{t_{n+1}}|^{2}]ds}$ as follows.
By the definitions of $\bar{Z}^k_{t_{n+1}}$ and $\bar{Z}_{t_{n+1}}$, Jensen's inequality and Cauchy-Schwarz's inequality, we have for all $\displaystyle{n=0,\ldots,N-2}$
\begin{eqnarray*}
\int_{t_{n}}^{t_{n+1}} E[|\bar{Z}^k_{t_{n+1}}-\bar{Z}_{t_{n+1}}|^{2}]ds &=&\hat{h} E[|\bar{Z}^k_{t_{n+1}}-\bar{Z}_{t_{n+1}} |^{2}] \\
&=& \frac{1}{\hat{h}}E\Big[\Big|E_{t_{n+1}}\Big[ \int_{t_{n+1}}^{t_{n+2}} (Z^k_{s}-Z_{s} ) ds\Big]\Big|^{2}\Big]  \nonumber\\
&\leq &    \frac{1}{\hat{h}}E\Big[ \Big|  \int_{t_{n+1}}^{t_{n+2}} (Z^k_{s}-Z_{s} ) ds \Big|^{2}\Big]  \nonumber\\
&\leq &     \int_{t_{n+1}}^{t_{n+2}} E | Z^k_{s}-Z_{s}   |^{2} ds. \nonumber\\
\end{eqnarray*}
Hence
\begin{eqnarray*}
\sum_{n=0}^{N-2} \int_{t_{n}}^{t_{n+1}} E[|\bar{Z}^k_{t_{n+1}}-\bar{Z}_{t_{n+1}}|^{2}]ds & \leq &  \sum_{n=0}^{N-2} \int_{t_{n+1}}^{t_{n+2}} E |Z^k_{s}-Z_{s}  |^{2} ds \nonumber\\
 &=&  \sum_{n=1}^{N-1} \int_{t_{n}}^{t_{n+1}} E |Z^k_{s}-Z_{s}    |^{2} ds\nonumber\\
 &\leq &  \sum_{n=0}^{N-1} \int_{t_{n}}^{t_{n+1}} E |Z^k_{s}-Z_{s} |^{2} ds \nonumber\\
 &= & \int_{0}^{T} | Z_{s}^{k}  -  Z_{s}  |^{2}ds,
\end{eqnarray*}
which tends to zero when $k$ goes to infinity by \eqref{stability FBDSDE for Zk}.
To conclude the proof, observe that by \eqref{Zs bounded}, $\displaystyle{\int_{t_{N-1}}^{t_{N}} E[|Z_{s}|^{2}]ds \leq C \hat{h} (1+|x|^2)}$.
\end{proof}
\ep

\section{Application: Rate of convergence in time for a numerical scheme for F-BDSDEs under globally Lipschitz continuous conditions}
In this section, we give the main application of our $L^2$-regularity result stated in Theorem \ref{Theorem L2 regularity}. This application will be in Corollary \ref{Corollary Vitesse de convergence Lipschitz} where we derive, under globally Lipschitz continuous conditions, a rate of convergence in time for the numerical scheme for the F-BDSDE \eqref{forward}-\eqref{1} studied in \cite{bach:benl:mato:mnif:16}.
\subsection{Numerical scheme for F-BDSDEs}\label{Num scheme BDSDEs}
We recall from \cite{bach:benl:mato:mnif:16}
the following discretized version of \eqref{forward}-\eqref{1}. Let
\begin{eqnarray*}\label{disctime}
\pi:t_{0}=0<t_{1}
<\ldots<t_{N}=T,
\end{eqnarray*}
be a partition of the time interval $[0,T]$. For simplicity,
 we take an equidistant partition of  $[0,T]$ i.e. $\hat{h} = \frac{T}{N}$ and
$t_n=n \hat{h}$, $0\leq n\leq N$.
 In the sequel, the notations $\Delta W_{n}=W_{t_{n+1}}-W_{t_{n}}$ and $\Delta B_{n}=B_{t_{n+1}}-B_{t_{n}}$, for $n=0,\ldots,N-1$ will be used.\\
The forward component $X$ is approximated by the classical forward Euler scheme:
 \begin{eqnarray}\label{Xn}
\left\{\begin{array}{ll}
X_{t_{0}}^{N}&=x,\\
 X_{t_{n+1}}^{N}&=X_{t_{n}}^{N}+\hat{h} b(X_{t_{n}}^{N})+\sigma(X_{t_{n}}^{N})\Delta W_{n}, \textrm{ for } n=0,\ldots,N-1.
\end{array}
\right.
\end{eqnarray}
The solution $(Y,Z)$ of (\ref{1}) is approximated by $(Y^{N},Z^{N})$ defined by
\begin{equation*}\label{4}
Y_{t_{N}}^{N} = \Phi(X_{T}^{N})\textrm{ and } Z_{t_{N}}^{N}=0,
\end{equation*}
and for $n=N-1,\ldots,0$, we set
\begin{equation}\label{Yn}
Y_{t_{n}}^{N} = E_{t_{n}}\Big[Y_{t_{n+1}}^{N}+h(t_{n+1},\Theta_{n+1}^{N})\Delta B_{n} + \hat{h} f(t_{n},\Theta_{n}^{N})\Big],
\end{equation}
\begin{equation*}\label{Zn}
\hat{h} Z_{t_{n}}^{N} = E_{t_{n}}\Big[Y_{t_{n+1}}^{N}\Delta W_{n}^\top
+ h(t_{n+1},\Theta_{n+1}^{N})
\Delta B_{n}\Delta W^\top_{n}\Big],
\end{equation*}
where
\begin{eqnarray*}
\Theta_{n}^{N}:=(X_{t_{n}}^{N},Y_{t_{n}}^{N},Z_{t_{n}}^{N}), \textrm{for all } n=0,\ldots,N.
\end{eqnarray*}
$^\top$ denotes the transpose operator and
$E_{t_{n}}$ denotes the conditional expectation w.r.t. the $\sigma$-algebra $\Fc_{t_{n}}$.\\
We also recall the continuous approximation of the solution of BDSDE (\ref{1}). For $\displaystyle{n=0,\ldots,N-1}$
\begin{equation}\label{1N}
Y_t^N:=Y_{t_{n+1}}^{N} +  \!\displaystyle{\int_{t}^{t_{n+1}}\!\!\!\!\!\!\!\!f(t_{n},\Theta_{n}^{N})ds}
+\!\displaystyle{\int_{t}^{t_{n+1}}\!\!\!\!\!\!\!\!h(t_{n+1},\Theta_{n+1}^{N})\overleftarrow{dB_{s}}}
-\!\displaystyle{\int_{t}^{t_{n+1}}\!\!\!\!\!\!Z_{s}^{N} dW_s},\textrm{ } t_{n}\leq t < t_{n+1}.
\end{equation}

\subsection{Rate of convergence for the Euler time discretization based numerical scheme for F-BDSDEs}\label{section-discrete-time-error}

In order to derive the rate of convergence in time of the numerical scheme \eqref{Xn}-\eqref{Yn}, the authors  in \cite{bach:benl:mato:mnif:16} proved the $L^2$-regularity for the martingale integrand $Z$ under strong assumptions on the coefficients. Indeed, they assume that the coefficients $b,\sigma$ and $\Phi$ are in $C_b^{2}$ and $f$ and $h$  are in $C_b^{2,2}$. Our $L^2$-regularity result stated in Theorem \ref{Theorem L2 regularity} requires the coefficients to be only globally Lipschitz continuous but enables us to derive the same rate of convergence in time derived in \cite{bach:benl:mato:mnif:16}. This is an important improvement for that numerical scheme.\\
Let us recall the following upper bound result (Theorem 3.1 in \cite{bach:benl:mato:mnif:16}) for the time discretization error. 
\begin{Theorem}[\cite{bach:benl:mato:mnif:16}]\label{theorem upper-bound}
Define the time discretization error by
\begin{eqnarray}\label{error(Y,Z)}
Error_{N}(Y,Z):=\sup_{0\leq s \leq T}E[|Y_{s}-Y_{s}^{N}|^{2}] +\sum_{n=0}^{N-1}\int_{t_n}^{t_{n+1}}E[|Z_{s}-Z_{s}^{N}|^{2}]ds,
\end{eqnarray}
where $Y^{N}$ and $Z^{N}$ are given by (\ref{1N}).
Under assumptions \textbf{(H1)} and \textbf{(H2)} we have
\begin{eqnarray}\label{convergence}
Error_{N}(Y,Z) &\leq&  C \hat{h}(1+|x|^2) + C \sum_{n=0}^{N-1} \int_{t_{n}}^{t_{n+1}}E[|Z_{s}-\bar{Z}_{t_{n}}|^{2}]ds \nonumber\\
&+&  C \sum_{n=0}^{N-1}\! \int_{t_{n}}^{t_{n+1}}\!\!\!\!\!\!\!\!\!\! E[|Z_{s}-\bar{Z}_{t_{n+1}}|^{2}]ds
+ C \sum_{n=0}^{N-1} \int_{t_{n}}^{t_{n+1}}E[|Y_{s}-Y_{t_{n}}|^{2}]ds\nonumber\\
&+& C \sum_{n=0}^{N-1} \int_{t_{n}}^{t_{n+1}}E[|Y_{s}-Y_{t_{n+1}}|^{2}]ds.
\end{eqnarray}
\end{Theorem}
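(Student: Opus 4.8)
The plan is to carry out the classical BSDE-type energy estimate, adapted to the doubly stochastic setting, comparing the continuous-time interpolations \eqref{1} and \eqref{1N} interval by interval on the grid $\pi$. Fix $n$ and work on $[t_n,t_{n+1}]$. With $\delta Y:=Y-Y^N$, $\delta Z:=Z-Z^N$, $\Delta f_s:=f(s,X_s,Y_s,Z_s)-f(t_n,\Theta_n^N)$ and $\Delta h_s:=h(s,X_s,Y_s,Z_s)-h(t_{n+1},\Theta_{n+1}^N)$, subtracting \eqref{1N} from \eqref{1} gives
\[
\delta Y_t=\delta Y_{t_{n+1}}+\int_t^{t_{n+1}}\Delta f_s\,ds+\int_t^{t_{n+1}}\Delta h_s\,\overleftarrow{dB_s}-\int_t^{t_{n+1}}\delta Z_s\,dW_s ,\qquad t_n\le t\le t_{n+1}.
\]
I would apply the It\^o formula for BDSDEs to $|\delta Y|^2$ and take expectations; the integrals against $W$ and against $B$ (the latter a backward It\^o integral, hence centred) vanish and one is left with the energy identity
\[
E|\delta Y_{t_n}|^2+E\!\int_{t_n}^{t_{n+1}}\!|\delta Z_s|^2\,ds=E|\delta Y_{t_{n+1}}|^2+2E\!\int_{t_n}^{t_{n+1}}\!\delta Y_s\!\cdot\!\Delta f_s\,ds+E\!\int_{t_n}^{t_{n+1}}\!|\Delta h_s|^2\,ds ,
\]
in which the $h$-term enters on the \emph{right} with a plus sign — the signature of the backward integral and the source of the main difficulty.

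Next I would bound $\Delta f,\Delta h$ via \textbf{(H2)}, using $|Y_s-Y_{t_n}^N|\le|Y_s-Y_{t_n}|+|\delta Y_{t_n}|$ and $|X_s-X_{t_n}^N|\le|X_s-X_{t_n}|+|X_{t_n}-X_{t_n}^N|$ together with the Euler bounds $\sup_nE|X_{t_n}-X_{t_n}^N|^2\le C\hat h(1+|x|^2)$ and $E|X_s-X_{t_n}|^2\le C\hat h(1+|x|^2)$, and for the $z$-arguments the decompositions $|Z_s-Z_{t_n}^N|\le|Z_s-\bar Z_{t_n}|+|\bar Z_{t_n}-Z_{t_n}^N|$ and its analogue at $t_{n+1}$. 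A crucial identity is $Z_{t_n}^N=\frac1{\hat h}E_{t_n}\big[\int_{t_n}^{t_{n+1}}Z_s^N\,ds\big]$ (multiply \eqref{1N} by $\Delta W_n$ and condition on $\Fc_{t_n}$), which yields $\hat h\,E|\bar Z_{t_n}-Z_{t_n}^N|^2\le E\int_{t_n}^{t_{n+1}}|\delta Z_s|^2\,ds$ by Cauchy--Schwarz: the mismatch between the scheme's $Z$ at a node and the step process $\bar Z$ there is itself absorbed into $\|\delta Z\|^2$. Plugging all this in, summing over $n$, and using Young's inequality, the left-hand side accumulates $E\int_0^T|\delta Z_s|^2\,ds$, while the right-hand side contains $C\hat h(1+|x|^2)$, a term $\gamma\int_0^TE|\delta Y_s|^2\,ds$, a constant $C$ times the four oscillation families $\sum_n\int_{t_n}^{t_{n+1}}E|Z_s-\bar Z_{t_n}|^2$, $\sum_n\int_{t_n}^{t_{n+1}}E|Z_s-\bar Z_{t_{n+1}}|^2$, $\sum_n\int_{t_n}^{t_{n+1}}E|Y_s-Y_{t_n}|^2$, $\sum_n\int_{t_n}^{t_{n+1}}E|Y_s-Y_{t_{n+1}}|^2$, and a residual $\delta Z$-term with coefficient of the form $\alpha^2(1+\lambda^{-1})+C\gamma^{-1}$. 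The point is that the Young parameters must be chosen \emph{large}, not small: taking $\lambda$ and $\gamma$ large makes this coefficient strictly below $1$ — possible precisely because $\alpha^2<1$ — at the only cost of enlarging $C$ in front of the oscillation families, so the residual is absorbed on the left.

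Finally I would close by discrete Gronwall: after absorption one has, for $a_n:=E|\delta Y_{t_n}|^2$, a recursion $a_n\le(1+C\hat h)a_{n+1}+C\hat h\,r_n$ (with $r_n$ the $n$-th oscillation term) together with a direct bound $E\int_0^T|\delta Z_s|^2\,ds\le Ca_N+C\sum_nr_n+C\hat h(1+|x|^2)$; since $a_N=E|\Phi(X_T)-\Phi(X_T^N)|^2\le K^2E|X_T-X_T^N|^2\le C\hat h(1+|x|^2)$, Gronwall gives $\max_na_n\le C\hat h(1+|x|^2)+C\sum_nr_n$, and one more elementary estimate on each $[t_n,t_{n+1}]$ (again via the energy identity) upgrades this to $\sup_{0\le s\le T}E|\delta Y_s|^2$; collecting everything yields \eqref{convergence}, the boundary piece $\int_{t_{N-1}}^{t_N}E|Z_s|^2\,ds$ coming from the last interval (where $\bar Z_{t_N}=0$) being $\le C\hat h(1+|x|^2)$ by \eqref{Zs bounded}. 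I expect the main obstacle to be exactly this sign issue in the energy identity: because $|\Delta h|^2$ sits on the wrong side, closing the estimate for $\delta Z$ hinges on exploiting the strict contraction $\alpha<1$ through the large-parameter Young splitting, while simultaneously one must track precisely how $Z^N$ at the nodes relates to $\bar Z_{t_n}$ and $\bar Z_{t_{n+1}}$ so that every residual collapses onto one of the quantities in \eqref{convergence} (which are then controlled by Theorem \ref{Theorem L2 regularity}).
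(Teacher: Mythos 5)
The paper does not actually prove this theorem: it is recalled verbatim from \cite{bach:benl:mato:mnif:16} (Theorem 3.1 there), so there is no in-paper proof to compare against. Your sketch reconstructs what is essentially the argument of that reference, and the ingredients you isolate are the right ones: subtracting \eqref{1N} from the true equation interval by interval, the doubly-stochastic It\^o energy identity in which $E\int|\Delta h_s|^2\,ds$ sits on the right with a plus sign, the identity $\hat{h}\,Z^N_{t_n}=E_{t_n}\big[\int_{t_n}^{t_{n+1}}Z^N_s\,ds\big]$ (obtained by multiplying \eqref{1N} by $\Delta W_n^\top$ and conditioning), which converts the node mismatch $|\bar{Z}_{t_n}-Z^N_{t_n}|$ into the running error $E\int|\delta Z_s|^2\,ds$, the Young splitting whose closure relies precisely on $\alpha^2<1$, telescoping plus discrete Gronwall, the terminal term handled through the Euler error and the Lipschitz continuity of $\Phi$, and a final per-interval estimate to pass from grid points to $\sup_{0\le s\le T}E|\delta Y_s|^2$. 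Two minor points of bookkeeping. First, $\hat{h}\,E|\bar{Z}_{t_{n+1}}-Z^N_{t_{n+1}}|^2\le E\int_{t_{n+1}}^{t_{n+2}}|\delta Z_s|^2\,ds$ places the residual on the \emph{next} interval, so the absorption with coefficient $\alpha^2(1+\lambda^{-1})+C\gamma^{-1}<1$ only closes after summing over $n$ (the $n=N-1$ term vanishing since $Z^N_{t_N}=\bar{Z}_{t_N}=0$); your per-node recursion $a_n\le(1+C\hat{h})a_{n+1}+C\hat{h}\,r_n$ should be extracted after this global absorption, or else the shifted $\delta Z$ contributions must be carried along explicitly. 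Second, the appeal to \eqref{Zs bounded} for $\int_{t_{N-1}}^{t_N}E[|Z_s|^2]\,ds$ is not needed for this statement, since that integral is exactly the $n=N-1$ summand of $\sum_{n}\int_{t_n}^{t_{n+1}}E[|Z_s-\bar{Z}_{t_{n+1}}|^2]\,ds$, which is already allowed on the right-hand side of \eqref{convergence}; it is only in the proof of Theorem \ref{Theorem L2 regularity} that this term must be estimated by $C\hat{h}(1+|x|^2)$.
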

The rate of convergence in time of our scheme under globally Lipschitz continuous assumptions is derived in the next corollary. 
\begin{Corollary}\label{Corollary Vitesse de convergence Lipschitz}
Under Assumptions $(\bf{H1})$ and $(\bf{H2})$, we have
\begin{eqnarray}\label{vitesse de convergence Lipschitz}
Error_{N}(Y,Z)
 &\leq&  C \hat{h} (1+|x|^2) .
\end{eqnarray}
\end{Corollary}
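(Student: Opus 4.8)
The plan is to obtain \eqref{vitesse de convergence Lipschitz} by plugging the $L^2$-regularity estimate of Theorem \ref{Theorem L2 regularity} into the upper bound of Theorem \ref{theorem upper-bound}. Indeed, \eqref{convergence} bounds $Error_{N}(Y,Z)$ by $C\hat{h}(1+|x|^2)$ plus four sums: the two $Z$-type sums $\sum_{n=0}^{N-1}\int_{t_{n}}^{t_{n+1}}E[|Z_{s}-\bar{Z}_{t_{n}}|^{2}]ds$ and $\sum_{n=0}^{N-1}\int_{t_{n}}^{t_{n+1}}E[|Z_{s}-\bar{Z}_{t_{n+1}}|^{2}]ds$, and the two $Y$-type sums $\sum_{n=0}^{N-1}\int_{t_{n}}^{t_{n+1}}E[|Y_{s}-Y_{t_{n}}|^{2}]ds$ and $\sum_{n=0}^{N-1}\int_{t_{n}}^{t_{n+1}}E[|Y_{s}-Y_{t_{n+1}}|^{2}]ds$. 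Since the step process $\bar{Z}$ is defined in \eqref{barZdef} in exactly the same way in both theorems, each of these four sums can be controlled directly by \eqref{L2 regularity estimate}.

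First I would dispose of the two $Z$-type sums: their sum is literally the second line of the left-hand side of \eqref{L2 regularity estimate}, so Theorem \ref{Theorem L2 regularity} gives at once $\sum_{n=0}^{N-1}\int_{t_{n}}^{t_{n+1}}E[|Z_{s}-\bar{Z}_{t_{n}}|^{2}+|Z_{s}-\bar{Z}_{t_{n+1}}|^{2}]ds\leq C\hat{h}(1+|x|^2)$. Next, for the two $Y$-type sums I would perform the elementary reduction that turns each time integral into a supremum over the subinterval, namely
\[
\sum_{n=0}^{N-1}\int_{t_{n}}^{t_{n+1}}E[|Y_{s}-Y_{t_{n}}|^{2}]ds\leq \hat{h}\sum_{n=0}^{N-1}\sup_{t_{n}\leq s\leq t_{n+1}}E[|Y_{s}-Y_{t_{n}}|^{2}]\leq T\max_{0\leq n\leq N-1}\sup_{t_{n}\leq s\leq t_{n+1}}E[|Y_{s}-Y_{t_{n}}|^{2}],
\]
using $N\hat{h}=T$, and similarly with $Y_{t_{n+1}}$ in place of $Y_{t_{n}}$. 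By the first line of \eqref{L2 regularity estimate}, both maxima are bounded by $C\hat{h}(1+|x|^2)$, so each $Y$-type sum is $\leq CT\hat{h}(1+|x|^2)$.

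Finally I would substitute these four bounds into \eqref{convergence} and collect the constants (all depending only on $T,K,\alpha$), which yields $Error_{N}(Y,Z)\leq C\hat{h}(1+|x|^2)$, i.e. \eqref{vitesse de convergence Lipschitz}. There is no genuine obstacle in this argument: the entire difficulty is already packed into Theorem \ref{Theorem L2 regularity}, and the only point requiring attention is the bookkeeping above that matches the time-integrated $Y$-increments of \eqref{convergence} with the $\sup$-type quantities actually estimated in \eqref{L2 regularity estimate}, together with the use of $N\hat{h}=T$ to absorb the number of subintervals without losing a factor of $N$.
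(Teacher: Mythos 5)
Your proposal is correct and is essentially the paper's own argument: the paper simply substitutes the $L^2$-regularity estimate \eqref{L2 regularity estimate} into the upper bound \eqref{convergence}, which is exactly what you do, with the (straightforward) bookkeeping for the $Y$-terms via $N\hat{h}=T$ spelled out explicitly.
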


\begin{proof}
The result follows by using the estimate \eqref{L2 regularity estimate} in the upper bound estimate \eqref{convergence}.
\end{proof}
\ep

%

\begin{thebibliography}{BBMM13}

\bibitem[Abo09]{abou:09}
O.~Aboura.
\newblock On the discretization of backward doubly stochastic differential
  equations.
\newblock {\em Arxiv:0907.1406v1}, 2009.

\bibitem[Abo11]{abou:11}
O.~Aboura.
\newblock {A regression Monte-Carlo method for Backward Doubly Stochastic
  Differential Equations}.
\newblock {\em HAL preprint hal-00607274}, 2011.

\bibitem[Ama13]{aman:13}
A.~Aman.
\newblock A numerical scheme for backward doubly stochastic differential
  equations.
\newblock {\em Bernoulli}, 19(1):93--114, 2013.

\bibitem[Bac14]{bach:14}
A.~Bachouch.
\newblock {\em Numerical computations for Backward Doubly Stochastic
  Differential Equations and Nonlinear Stochastic {PDE}s}.
\newblock PhD thesis, University Le Mans, 2014.

\bibitem[BBMM16]{bach:benl:mato:mnif:16}
A.~Bachouch, M.A. {Ben Lasmer}, A.~Matoussi, and M.~Mnif.
\newblock Numerical scheme for semilinear stochastic {PDE}s via {B}ackward
  {D}oubly {S}tochastic {D}ifferential {E}quations.
  \newblock {\em Stochastics and Partial Differential Equations: Analysis and Computations}, 4(3):592-634, 2016.  

\bibitem[BGM16]{bach:gobe:mato:16}
A.~Bachouch, E. Gobet, A.~Matoussi.
\newblock Empirical Regression Method for Backward Doubly Stochastic Differential Equations.
  \newblock {\em SIAM/ASA J. Uncertainty Quantification}, 4(1): 358--379, 2016.  

\bibitem[B97]{ball:97}
V. Bally.
\newblock Approximation scheme for solutions of BSDE.
  \newblock {\em Pitman Research Notes in Mathematics}, 364: 177--191, 1997.


\bibitem[BCZ11]{bao:cao:zhao:11}
F.~Bao, Y.~Cao and W.~Zhao.
\newblock Numerical Solutions for Forward Backward Doubly Stochastic Differential Equations and Zakai equations.
  \newblock {\em International Journal for Uncertainty Quantification}, 1(4): 351-367, 2011.  


\bibitem[BCMZ16]{bao:cao:meir:zhao:16}
F.~Bao, Y.~Cao, A.J.~Meir and W.~Zhao.
\newblock A First Order scheme for Backward Doubly Stochastic Differential Equations.
  \newblock {\em SIAM/ASA Journal on Uncertainty Quantification}, 4(1): 413-445, 2016.

\bibitem[BL78]{bens:lion:78}
 A.~Bensoussan and J.-L Lions.
\newblock Applications des In\'equations variationnelles en contr\^ole
stochastique, 
\newblock {\em Dunod}, Paris, 1978.


\bibitem[BM01]{ball:mato:01} V.~Bally, V. and A.~Matoussi. \newblock  Weak solutions for SPDEs and backward doubly stochastic differential equations. \newblock {\em J. Theoret. Probab.}, 14(1):125-164, 2001.

\bibitem[BE08]{bouc:elie:08}
B. Bouchard and R. Elie. Discrete-time approximation of decoupled forward-backward SDEs with jumps.  \textit{Stochastic Processes and Their applications}, 118(1): 53-57, 2008.

\bibitem[BT04]{bouc:touz:04}
B. Bouchard and N.Touzi. Discrete time approximation and Monte-Carlo
Simulation of Backward Stochastic differential equations. \textit{ Stochastic
  Processes and Their applications}, 111: 175-206, 2004.


\bibitem[BuM01]{buck:ma:01} R.~Buckdahn and J.~Ma.  \newblock Stochastic viscosity solutions for nonlinear stochastic partial differential equations. I. \newblock {\em  Stochastic Process. Appl.}, 93(2):181-204, 2001.

%
%
%
%
%

%
%
%
%


%
%
%

%
\bibitem[EPQ97]{elka:peng:quen:97} N.~El Karoui, S.~Peng, M.~C.~Quenez.
\newblock Backward Stochastic Differential Equations in finance.
\newblock {\em Mathematical Finance}, 7(1):1-71, 1997.

\bibitem[GLW05]{gobe:lemo:wari:05}
E.~Gobet, J-P. Lemor, and X.~Warin.
\newblock A regression-based {M}onte {C}arlo method to solve backward
  stochastic differential equations.
\newblock {\em Annals of Applied Probability}, 15(3):2172--2202, 2005.

\bibitem[GLW06]{gobe:lemo:wari:06}
J-P. Lemor, E.~Gobet, and X.~Warin.
\newblock Rate of convergence of an empirical regression method for solving
  generalized backward stochastic differential equations.
\newblock {\em Bernoulli}, 12(5):889--916, 2006.







\bibitem[GN95]{gyon:nual:95}  I.~Gyongy  and D.~Nualart. \newblock Implicit scheme for quasi-linear parabolic partial differential equations perturbed by space-time white noise. \newblock {\em Stochastic Processes and Their applications}, 58:57-72, 1995.



\bibitem[GK10]{gyon:kryl:10} I.~Gyongy  and N.~Krylov.  \newblock Accelerated finite difference schemes for linear stochastic partial differential
equations in the whole space. \newblock {\em SIAM J.Math.Anal.},42:2275-2296, 2010.

\bibitem[GN95]{gyon:nual:95}  I.~Gyongy  and D.~Nualart. \newblock Implicit scheme for quasi-linear parabolic partial differential equations perturbed by space-time white noise. \newblock {\em Stochastic Processes and Their applications}, 58:57-72, 1995.

\bibitem[G99]{gyon:99} I.~Gyongy. \newblock  Lattice approximations for stochastic quasi-linear parabolic partial differential equations driven by a space-time white noise I. \newblock {\em  Potential Anal.},  11:1-37, 1999.

\bibitem[JK10]{jent:kloe:10} A.~Jentzen  and P.~Kloeden.   \newblock  Taylor Expansions of solutions of stochastic partial differential equation with additive noise. \newblock {\em The Annals of Probability},  38(2): 532-569, 2010.



\bibitem[K99]{kryl:99} N.V.~Krylov.   \newblock An analytic approach to SPDEs. In Stochastic partial differential equations: six perspectives,  \newblock {\em Math. Surveys Monogr. Amer. Math. Soc., Providence, RI}, 64:185-242, 1999.

\bibitem[K84]{kuni:84} N.V.~Krylov.   \newblock Stochastic differential equations and stochastic flows of diffeomorphisms. \newblock {\em Ecole d'\'et\'e de Probabilit\'e de Saint-Flour XII -1982 (PL Hennequin, ed), Lecture Notes in Mathematics}, 1097:143-303, 1984.


\bibitem[LS98]{lion:soug:98}  P.-L.~Lions and P.E.~Souganidis.  \newblock  Fully nonlinear viscosity stochastic partial differential equations: non-smooth equations and applications.   \newblock {\em C.R. Acad. Sci. Paris}, 327(1):735-741, 1998.
%

\bibitem[LS02]{lion:soug:02}  P.-L.~Lions and P.E.~Souganidis. \newblock  Viscosity solutions of fully nonlinear stochastic partial differential equations. \newblock {\em Surikaisekikenky Kokyuroku}, 1287: 58-65, 2002.


\bibitem[L04] {lots:04} S.~Lototsky.
\newblock Optimal filtering of stochastic parabolic equations.
\newblock {\em In recent developments stochastic analysis and related topics, World Sci. Publ., Hackensack, NJ.} 330-353.


\bibitem[LMR97] {loto:Miku:Rozo:97} S.~Lototsky, R.~Mikulevicius, and B.L.~Rozovskii.
\newblock Nonlinear filtering revisited: a spectral approach.
\newblock {\em SIAM J. Control Optim.}, 35 (2): 435-461, 1997.



\bibitem[MS02]{mato:sche:02}
A.~Matoussi and  M.~Scheutzow.  \newblock  Stochastic PDEs driven by nonlinear noise and backward doubly SDEs. \newblock {\em J. Theoret. Probab.}, 15(1):1-39, 2002.


\bibitem[N06]{nual:06} D. Nualart. The Malliavin calculus and related topics. \textit{Springer-Verlag Berlin Heidelberg}. 2006


\bibitem[NP88]{nual:pard:88} D.~Nualart, E.~Pardoux.   
\newblock  Stochastic calculus with anticipating integrands. 
\newblock {\em Prob. Theory Relat. Fields}, 78: 535--581,1988.


%
%


%

%

\bibitem[PP92]{pard:peng:92}
E.~Pardoux and S.G. Peng.
\newblock Backward Stochastic Differential Equations and Quasilinear Parabolic Partial Differential Equations.
\newblock {\em Lect. Notes Inf. Sci.}, 176: 200--217, 1992.

\bibitem[PP94]{pard:peng:94}
E.~Pardoux and S.G. Peng.
\newblock Backward doubly stochastic differential equations and systems of quasilinear {SPDEs}.
\newblock {\em Probability Theory and Related Fields}, 98(2):209--227, 1994.
%
\bibitem[SYY08]{shi:yang:yuan:08}
Y.~Shi, W.~Yang, and J.~Yuan.
\newblock Numerical computations for {B}ackward {D}oubly {SDE}s and {SPDE}s.
\newblock {\em arXiv preprint arXiv:0805.4662}, 2008.

%
%
 \bibitem[W05]{wals:05} J.B.~Walsh.  \newblock Finite Elements Methods for Parabolic Stochastic PDE's. \newblock {\em Potential Analysis}, 23:1-43, 2005.

\bibitem[Z04]{zhan:04} J.~Zhang. 
\newblock  A numerical scheme for BSDE's.  
\newblock {\em The Annals of Applied Probability}, 14(1): 459--488, 2004.



\end{thebibliography}

\end{document}